\font\tenscr=rsfs10  scaled \magstep0 \font\sevenscr=rsfs7 scaled \magstep0
\font\fivescr=rsfs5 scaled \magstep0 \skewchar\tenscr='177 \skewchar\sevenscr='177
\def\mathscr{\fam\scrfam}
\font\stenscr=rsfs5  scaled \magstep3
\newtheorem{thm}{Theorem}[section]
\newtheorem{lemma}[thm]{Lemma}
\newtheorem{cor}[thm]{Corollary}
\newtheorem{rem}[thm]{Remark}
\newtheorem{example}[thm]{Example}
\newtheorem{prop}[thm]{Proposition}
\def\endproof{\qed\endtrivlist}
\let\csname endproof*\endcsname=\endproof
\def\qedsymbol{\ifmmode\bgroup\else$\bgroup\aftergroup$\fi
  \vcenter{\hrule\hbox{\vrule height.6em\kern.6em\vrule}\hrule}\egroup}
\def\qed{\ifmmode\else\unskip\nobreak\fi\quad\qedsymbol}
\begin{document}

\journal{}

\title{\Large\bf Lattice induced threshold functions and Boolean functions}
\author[szeg]{Eszter K. Horv\'ath}
\ead{horeszt@math.u-szeged.hu}

\author[fsuns]{Branimir \v Se\v selja}
\ead{seselja@dmi.uns.ac.rs}

\author[fsuns]{Andreja Tepav\v cevi\'c\corref{cor}}
\ead{andreja@dmi.uns.ac.rs}

\cortext[cor]{Corresponding author.\ Tel.: +381214852862; fax: +381216350458.}

\address[szeg]{Bolyai Institute, University of Szeged,
Aradi v\'ertan\'uk tere 1, Szeged 6720, Hungary}

\address[fsuns]{University of Novi Sad, Faculty of Science, Trg D.\ Obradovi\'ca 4, 21000 Novi Sad, Serbia}

\begin{abstract}\small

Lattice induced threshold function is a Boolean function determined by a particular linear combination of lattice elements. We prove that every isotone Boolean function is a lattice induced threshold function and vice versa. We also represent lattice valued up-sets on a finite Boolean lattice in the framework of cuts and lattice induced threshold functions. In terms of closure systems we present necessary and sufficient conditions for a representation of lattice valued up-sets on a finite Boolean lattice by linear combinations of elements of the co-domain lattice.

\end{abstract}
\begin{keyword}\small
Lattice induced threshold functions, isotone Boolean functions, cuts.
\end{keyword}

\maketitle

\section{Introduction}
,,Threshold functions provide a simple but fundamental model for many questions investigated in electrical engineering, artificial intelligence, game theory and many other areas." (Quotation from \cite{CH}.)
In \cite{M}, modeling neurons  and political decisions are also mentioned, as application of classical threshold functions.
A classical \textbf{threshold function} is a Boolean function $f: \{0,1\}^n\rightarrow \{0,1\}$ such that there exist real numbers $w_1,\ldots, w_n ,t$, fulfilling
\[f(x_1,\ldots,x_n) =1 \mbox{ if and only if }  \sum_{i=1}^{n} w_i\cdot x_i \geq t,\] where $w_i$ is called  {\bf weight} of $x_i$, for $i=1,2, \ldots , n$ and $t$ is a constant called the {\bf threshold value}.
In this paper we define a new but related notion: the so called \it lattice induced threshold function \rm
and we investigate its properties.

\subsection{Historical background}

For the historical as well as  for the basic mathematical background we recommend the books \cite{CH,M,S}.
Although the main results about  classical threshold function are not algebraic, we can list some algebraic papers from several areas of algebra. In \cite{ABGG} the authors reveal connection between classical threshold functions and fundamental ideals of group-rings.
Paper \cite{H1} determines the invariance groups of threshold functions.
Paper \cite{He} proves that classical threshold functions cannot be characterized by a finite set of standard or generalized constraints.

\subsection{Motivation}

Isotone Boolean functions constitute clone; threshold fun\-ctions are not closed under  superposition, see Theorem 9.2 in \cite{CH}, so they do not constitute clone.
It is easy to see that threshold functions with positive weights and a threshold value are isotone. However, an isotone Boolean function is not necessarily threshold, e.g. $f=x\cdot y \vee w\cdot z$ is isotone, but not a threshold function. Therefore, our aim was to generalize threshold functions in the framework of lattice valued functions, in order to obtain a characterization of all isotone Boolean functions and to represent them by particular linear combinations.

\subsection{Outline}
In  Preliminaries, we present properties of closure operations, lattice valued functions, Boolean functions and classical threshold functions. Part \ref{canr} of this section contains some new structure properties of induced closure systems, including a particular canonical representation of lattice valued functions.

 In Section 3 we define threshold functions induced by complete lattices; these are Boolean functions determined by particular linear combination of lattice elements. We prove that every isotone Boolean function is a lattice induced threshold function and vice versa.

 In Section \ref{acut} we represent lattice valued up-sets on a finite Boolean lattice $B$ in the framework of cuts and lattice induced threshold functions. We construct a special lattice valued function $\overline{\beta}$ on $B$, whose co-domain is a free distributive lattice. We prove that the cuts of any lattice valued function on $B$ are contained in the collection of cuts of $\overline{\beta}$, and that all up-sets of $B$ are cuts of $\overline{\beta}$.

Section \ref{linco} is devoted to the representation of lattice valued up-sets on $B$ by linear combination of elements of the co-domain lattice. In terms of closure systems we present necessary and sufficient conditions for such representation, as well as conditions for its existence.

\section{Preliminaries}\label{prel}
\subsection{Order, lattices, closures}
Our basic notion is an ordered set, \textbf{poset}, denoted by $(P,\leq )$, where $\leq$ is an order on a set $P$. If a poset is a \textbf{lattice}, then it is denoted by $(L,\leq )$, with the meet  and the join    of $a,b\in L$ being $a\wedge b$  and $a\vee b$ respectively. The bottom and the top of a poset, if they exist, are 0 and 1, respectively. If $(P,\leq )$ is a poset, then we denote by $\bigwedge M$ and $\bigvee M$  the meet and the join of $M\subseteq P$ respectively, if they exist. We  deal with \textbf{complete lattices} and \textbf{free distributive lattices with $n$ generators}. We also use finite \textbf{Boolean lattices}, represented by all $n$-tuples of 0 and 1, ordered componentwise, and denoted by $(\{ 0,1\}^n,\leq )$.

A \textbf{closure system} on $A$ is a collection of subsets of $A$, closed under set intersection and containing $A$.

A mapping $X\mapsto \overline{X}$ on a power set $\mathcal{P}(A)$ of a set $A$, is a \textbf{closure operator} on $A$, if it fulfills: $X\subseteq \overline{X}$, $\overline{\overline{X}}=\overline{X}$, and $X\subseteq Y \Longrightarrow \overline{X}\subseteq \overline{Y}$. A subset $X$ of $A$ fulfilling $\overline{X}=X$ is said to be \textbf{closed} under the corresponding closure operator. We use the following known properties of closure systems and closure operators.

{\sl A closure system is a complete lattice under the set-inclu\-sion.}

{\sl The collection of all closed sets under the corresponding closure operator is a closure system.}

{\sl If $\mathcal{F}$ is a closure system over $A$, then the mapping \[X\mapsto\bigcap\{ Y\in \mathcal{F}\mid X\subseteq Y\}\] is a closure operator.}

The proof of the following technical lemma is straightforward.

\begin{lemma} \label{malalema}\sl
Let $\mathcal{F}$ be a closure system consisting of some up-sets on a poset $(P,\leq )$.
For  $x\in P$,  denote
\begin{equation}\overline{x}=\bigcap\{f\in{\mathcal F}\mid x\in f\}.\label{repkan}\end{equation}
Then, for all $x,y,z\in P$, the following is true:

$ a)$ $x\leq y$ implies $\overline{y}\subseteq\overline{x}$.

$ b)$ $x\in \overline{x}$.

$ c)$ $\uparrow$$x\subseteq \overline{x}$.

$ d)$ If $z\in \overline{x}$ then $\overline{z}\subseteq \overline{x}$.
\end{lemma}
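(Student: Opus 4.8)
The plan is to lean on two elementary facts and use them repeatedly. The first is that $\bigcap$ is antitone in the family being intersected: if $\mathcal A\subseteq\mathcal B$ are families of subsets of $P$, then $\bigcap\mathcal B\subseteq\bigcap\mathcal A$. The second is the defining feature of an up-set: if $f\in\mathcal F$ and $x\in f$ and $x\le y$, then $y\in f$. Every part of the lemma reduces to combining these two observations with the definition \eqref{repkan}.

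For part a), I would argue that $x\le y$ together with the up-set property of each $f\in\mathcal F$ forces $x\in f\Rightarrow y\in f$, so $\{f\in\mathcal F\mid x\in f\}\subseteq\{f\in\mathcal F\mid y\in f\}$; intersecting the larger family gives the smaller set, hence $\overline y\subseteq\overline x$. Part b) is immediate from the definition: $x$ belongs to every member of the family $\{f\in\mathcal F\mid x\in f\}$, so it belongs to their intersection $\overline x$. Part c) is a mild strengthening of this: for each $f\in\mathcal F$ with $x\in f$, the up-set property yields $\uparrow x\subseteq f$, and intersecting over all such $f$ gives $\uparrow x\subseteq\overline x$.

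For part d), assume $z\in\overline x$; unpacking \eqref{repkan}, this says precisely that $z\in f$ for every $f\in\mathcal F$ with $x\in f$, i.e. $\{f\in\mathcal F\mid x\in f\}\subseteq\{f\in\mathcal F\mid z\in f\}$. Applying antitonicity of $\bigcap$ once more gives $\overline z\subseteq\overline x$. The only point that requires a moment's attention is keeping the direction of the induced inclusion straight when passing from a family to its intersection; beyond that there is no real obstacle, and indeed the authors themselves flag the lemma as straightforward.
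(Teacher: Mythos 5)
Your proof is correct, and since the paper itself omits the argument (declaring the lemma straightforward), yours is exactly the intended elementary verification: each part follows from the antitonicity of $\bigcap$ with respect to the indexing family together with the up-set property of the members of $\mathcal F$. In particular your handling of part $d)$, which needs only the definition of $\overline{x}$ and not the up-set property, is clean and the inclusion directions are all kept straight.
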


More details about posets, closures and lattices can be found in e.g., \cite{DP}.

\subsection{Lattice valued functions}\label{prfuz}
Let $B$ be a nonempty set and $L$ a complete lattice. Every mapping $\mu:B\rightarrow L$ is called a \textbf{lattice valued} (\textbf{$L$-valued}) \textbf{function} on $B$.


Let $p\in L$. A \textbf{cut set} of an $L$-valued function $\mu:B\rightarrow L$  (a $p$-cut) is a subset $\mu_p\subseteq B$ defined by:
\[ x\in \mu_p \mbox{ if and only if } \mu(x)\geq p.\] In other words, a $p$-cut of $\mu:B\rightarrow L$ is the inverse image of the principal filter ${\uparrow}p$, generated by $p\in L$: \begin{equation}\mu_p=\mu^{-1}({\uparrow}p).\label{invim}\end{equation}

It is obvious that for $p,q\in L$,
\[\mbox{from $p\leq q$ it follows that $\mu_q\subseteq\mu_p$.}\]

The collection   $\mu_L=\{f\subseteq B\mid f=\mu_p,\mbox{ for some } p\in L\}$ of all cuts of $\mu:B\rightarrow L$ is usually ordered by set-inclusion. The following is known.

\begin{lemma}\label{intr}\sl If $\mu:B\rightarrow L$ is an $L$-valued function on $B$, then the collection $\mu_L$ of all cuts of $\mu$  is a closure system on $B$ under the set-inclusion. \end{lemma}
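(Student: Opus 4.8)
The plan is to verify directly the two defining properties of a closure system on $B$: that $B$ itself belongs to $\mu_L$, and that $\mu_L$ is closed under arbitrary set intersections. Both facts will come from the completeness of $L$ together with the elementary equivalence recalled just before the statement, namely that $x\in\mu_p$ iff $\mu(x)\geq p$.

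First I would dispose of $B$. Since $L$ is a complete lattice it has a least element $0=\bigwedge L$, and $\mu(x)\geq 0$ holds for every $x\in B$; hence $\mu_0=B$, so $B\in\mu_L$. Next comes the main step: given an arbitrary family $\{p_i\mid i\in I\}\subseteq L$, I claim that $\bigcap_{i\in I}\mu_{p_i}=\mu_q$, where $q=\bigvee_{i\in I}p_i$, the join existing by completeness of $L$. One inclusion: if $x\in\mu_q$, then $\mu(x)\geq q$, and since $p_i\leq q$ for each $i$ this gives $\mu(x)\geq p_i$, i.e. $x\in\mu_{p_i}$ for all $i$. Conversely, if $x\in\mu_{p_i}$ for every $i$, then $\mu(x)$ is an upper bound of $\{p_i\mid i\in I\}$, whence $\mu(x)\geq\bigvee_{i\in I}p_i=q$, so $x\in\mu_q$. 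Equivalently, using $(\ref{invim})$, $\bigcap_{i\in I}\mu^{-1}({\uparrow}p_i)=\mu^{-1}\bigl(\bigcap_{i\in I}{\uparrow}p_i\bigr)=\mu^{-1}({\uparrow}q)$, since ${\uparrow}q$ is exactly the intersection of the principal filters ${\uparrow}p_i$. Thus every intersection of cuts is again a cut; the empty intersection reproduces $B$, already shown to lie in $\mu_L$.

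Since, by definition, a collection of subsets of $B$ that contains $B$ and is closed under set intersection is a closure system, the two verifications above finish the proof. I do not expect any genuine obstacle: the argument is routine, and the only point requiring attention is that both a bottom element and arbitrary joins of the $p_i$ must actually be available in the co-domain — which is precisely the role played by the hypothesis that $L$ is a complete lattice.
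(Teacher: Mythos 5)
Your proof is correct and is exactly the standard argument the paper relies on: the lemma is stated without proof (``The following is known''), and your key identity $\bigcap_{i\in I}\mu_{p_i}=\mu_{\bigvee_{i\in I}p_i}$ is precisely formula (\ref{pres}), which the paper records later in Section~\ref{canr} with reference to \cite{BSAT1}. The observation that $\mu_0=B$ via the bottom element of the complete lattice $L$ correctly settles containment of $B$, so nothing is missing.
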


The following is a kind of a converse.
\begin{prop}\label{synth}\sl Let $\mathcal{F}$ be a closure system over a set $B$. Then there is a lattice $L$ and an $L$-valued function $\mu:B\rightarrow L$, such that the collection $\mu_L$ of cuts of $\mu$ is $\mathcal{F}$. \end{prop}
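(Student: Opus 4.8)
The plan is to build the lattice $L$ directly out of the closure system $\mathcal{F}$ itself, using the canonical ordering. Recall that, being a closure system, $\mathcal{F}$ is a complete lattice under reverse inclusion (meets are intersections, which stay in $\mathcal{F}$; joins are the closure of the union). So I would take $L = (\mathcal{F}, \supseteq)$, i.e.\ $\mathcal{F}$ ordered by the \emph{reverse} of set-inclusion, which is a complete lattice by the stated properties. The bottom of $L$ is $B$ and the top is $\bigcap \mathcal{F}$.

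Next I would define the $L$-valued function $\mu : B \rightarrow L$ by
\[
\mu(x) = \bigcap\{ Y \in \mathcal{F} \mid x \in Y \},
\]
that is, $\mu(x)$ is the smallest member of $\mathcal{F}$ containing $x$ (this is well defined since $\mathcal{F}$ is closed under intersection and $B \in \mathcal{F}$). Note this is exactly the closure of $\{x\}$ under the closure operator induced by $\mathcal{F}$; in the notation of Lemma~\ref{malalema} it is $\overline{x}$. The core computation is then to identify the cuts of $\mu$. For $p \in L$, i.e.\ for $p = F$ some element of $\mathcal{F}$, unwinding the definition of a $p$-cut together with the reverse ordering gives
\[
\mu_p = \{ x \in B \mid \mu(x) \geq p \} = \{ x \in B \mid \mu(x) \subseteq F \} = \{ x \in B \mid \overline{\{x\}} \subseteq F \}.
\]
Since $x \in \overline{\{x\}}$ always, and since $F$ is closed, $x \in F$ forces $\overline{\{x\}} \subseteq F$; conversely $\overline{\{x\}} \subseteq F$ forces $x \in F$. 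Hence $\mu_F = F$ for every $F \in \mathcal{F}$, so every member of $\mathcal{F}$ is a cut of $\mu$. Conversely, by Lemma~\ref{intr} every cut of $\mu$ lies in a closure system, but more directly every $\mu_p$ equals some $F \in \mathcal{F}$ by the displayed computation, so $\mu_L \subseteq \mathcal{F}$. Therefore $\mu_L = \mathcal{F}$, as required.

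I do not expect a serious obstacle here; the statement is essentially a bookkeeping exercise once one has the right candidate for $L$. The one point that needs care — and the place where a reader could slip — is the direction of the order on $\mathcal{F}$: cuts shrink as $p$ grows, while the closure operator is monotone, so $L$ must be $\mathcal{F}$ under \emph{reverse} inclusion for the principal filters ${\uparrow}p$ to pull back to the members of $\mathcal{F}$ themselves. After that, the verification $\mu_F = F$ uses only that $F$ is closed and that $x \in \overline{\{x\}}$, both of which are immediate. If one prefers, one can also observe that $\mu$ is (up to relabeling) the canonical representation of $\mathcal{F}$ alluded to in Part~\ref{canr}, and cite that; but the self-contained argument above is short enough to give in full.
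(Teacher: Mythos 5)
Your proposal is correct and takes exactly the route the paper does: the paper's proof simply declares that $L=(\mathcal{F},\supseteq)$ with $\mu(x)=\bigcap\{f\in\mathcal{F}\mid x\in f\}$ works and leaves the check to the reader, while you have written out that check (namely $\mu_F=F$ for each $F\in\mathcal{F}$ and $\mu_L\subseteq\mathcal{F}$) in full. Your remark about needing the \emph{reverse} inclusion order on $\mathcal{F}$ is exactly the one subtlety, and matches the paper's ``ordered dually to the set-inclusion.''
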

\begin{proof} It is straightforward to check that a required  lattice $L$ is the collection $\mathcal{F}$ ordered dually to the set-inclusion, and that $\mu:B\rightarrow L$ can be defined as $\overline{x}$ in (\ref{repkan}):
\begin{equation}\mu (x) =\bigcap\{f\in{\mathcal F}\mid x\in f\}.\label{fsynth}\end{equation}
\end{proof}

\begin{rem}\rm  From the above proof, using the notation therein, one can straightforwardly deduce that  for every $f\in\mathcal{F} $, the cut $\mu_f$ coincides with $f$, i.e., that $\mu_f=f$.
\end{rem}

\subsection{Boolean functions; threshold function}\label{thf}

A \textbf{Boolean function} is a mapping $f: \{0,1\}^n\rightarrow \{0,1\}$, $n\in \mathbb{N}$.

The domain $\{0,1\}^n$ of a Boolean function is usually ordered componentwise, with respect to the natural order $0\leq 1$
:
$(x_1,x_2,\ldots,x_n)\leq (y_1,y_2,\ldots,y_n)$ if and only if for every $i\in\{1,\ldots,n\}$, $x_i\leq y_i$.

As it is known, the poset  $(\{ 0,1\}^n,\,\leq )$ is a Boolean lattice. Moreover, every finite Boolean lattice with $n$ atoms is isomorphic to this one.

Recall that a subset $F$ of a poset $(P,\,\leq\,)$ is an \textbf{up-set} (\textbf{order semi-filter, semi-filter}) on $P$ if for all $x\in F$ and $y\in P$,
\[x\leq y \;\mbox{ implies }\; y\in F.\]

A Boolean function $f:\{0,1\}^n\rightarrow \{0,1\}$ is \textbf{isotone} (order preserving, positive, as in \cite{CH}), if for every $x,y\in \{0,1\}^n$,
from $x\leq y$, it follows that $f(x)\leq f(y)$.

The following is easy to check.
\begin{lemma}\sl\label{prva} The set $F\subseteq \{0,1\}^n$ is an order semi-filter on $(\{ 0,1\}^n,\,\leq )$  if and only if  Boolean function $f$ defined by
\[f(x)=1\;\mbox{ if and only if }\; x\in F\] is isotone. \end{lemma}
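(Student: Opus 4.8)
The plan is to prove both directions of the equivalence by translating between the set-theoretic condition on $F$ and the order-theoretic condition on $f$. This is a routine unpacking of definitions, so I would keep it short.

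First I would assume that $F\subseteq\{0,1\}^n$ is an order semi-filter and show the associated $f$ is isotone. Take $x,y\in\{0,1\}^n$ with $x\leq y$. If $f(x)=0$ there is nothing to prove, since then $f(x)=0\leq f(y)$ regardless of the value of $f(y)$ (recall $0$ and $1$ are the bottom and top of $\{0,1\}$). If $f(x)=1$, then by definition of $f$ we have $x\in F$; since $F$ is an up-set and $x\leq y$, we get $y\in F$, hence $f(y)=1$, so $f(x)=1\leq 1=f(y)$. In either case $f(x)\leq f(y)$, so $f$ is isotone.

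Conversely, I would assume $f$ is isotone and show $F=\{x\mid f(x)=1\}$ is an up-set. Let $x\in F$ and $y\in\{0,1\}^n$ with $x\leq y$. Then $f(x)=1$, and isotonicity gives $1=f(x)\leq f(y)$, forcing $f(y)=1$, i.e. $y\in F$. Hence $F$ is an order semi-filter. Since the correspondence $F\mapsto f$ given by $f(x)=1\iff x\in F$ is a bijection between subsets of $\{0,1\}^n$ and Boolean functions on $\{0,1\}^n$, the two conditions are equivalent, which is the claim.

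There is no real obstacle here: the only thing to be slightly careful about is the trivial direction of the inequality $f(x)\leq f(y)$ when $f(x)=0$, which holds automatically because $0$ is the least element of the codomain $\{0,1\}$; everything else is a direct application of the definition of an up-set and of isotonicity.
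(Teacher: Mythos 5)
Your proof is correct and is exactly the routine definition-unpacking the paper has in mind when it states the lemma with the remark ``The following is easy to check'' and omits the argument. Both directions are handled properly, including the trivial case $f(x)=0$, so nothing further is needed.
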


A \textbf{threshold function} is a Boolean function $f: \{0,1\}^n\rightarrow \{0,1\}$ such that there exist real numbers $w_1,\ldots, w_n ,t$, fulfilling
\[f(x_1,\ldots,x_n) =1 \mbox{ if and only if }  \sum_{i=1}^{n} w_i\cdot x_i \geq t,\] where $w_i$ is called  {\bf weight} of $x_i$, for $i=1,2, \ldots , n$ and $t$ is a constant called the {\bf threshold value}.

\subsection{Lattice valued Boolean functions}

A function $f:\{0,1\}^{n}\rightarrow L$, where $L$ is a
complete lattice, is called a \textbf{lattice valued} (\textbf{$L$-valued}) \textbf{Boolean function}.

For $f:\{0,1\}^{n}\rightarrow L$ and $p\in L$, a cut set (cut) $f_p$ is a subset of $\{0,1\}^n$ defined as above:
\[f_p=\{ x\in\{ 0,1\}^n\mid f(x)\geq p\}.\]


 An $L$-valued Boolean function $\mu:B\rightarrow L$  is called a \textbf{lattice valued} {(\textbf{$L$-valued}) \textbf{up-set}, or a \textbf{lattice valued} (\textbf{$L$-valued}) \textbf{semi-filter} on $B$ if
from $x\leq y$ it follows that $\mu(x)\leq \mu(y)$. This definition originates from the research of lattice valued structures whose domains are ordered structures and lattices (\cite{JMST1}). The following lemma is adapted version of a result from  \cite{JMST1}.
\begin{lemma}\label{carup}\sl
Let $B$ be a Boolean lattice and $\mu:B\rightarrow L$   an $L$-valued Boolean function. Then $\mu$ is an $L$-valued up-set on $B$ if and only if all the cuts of $\mu$  are up-sets on $B$. \end{lemma}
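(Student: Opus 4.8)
The plan is to prove the two implications of the "if and only if" separately, using the already established fact (Lemma~\ref{carup} is exactly what we are proving, so I mean the cut machinery from Section~\ref{prfuz}) that for any $L$-valued function the assignment $p\mapsto\mu_p$ reverses order, i.e.\ $p\leq q$ implies $\mu_q\subseteq\mu_p$. First I would assume $\mu$ is an $L$-valued up-set on $B$ and fix an arbitrary $p\in L$; to show $\mu_p$ is an up-set on $B$, take $x\in\mu_p$ and $y\in B$ with $x\leq y$. By definition of the cut, $\mu(x)\geq p$; since $\mu$ is isotone, $\mu(y)\geq\mu(x)\geq p$, hence $y\in\mu_p$. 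As $p$ was arbitrary, all cuts of $\mu$ are up-sets. This direction is essentially immediate.

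For the converse I would assume every cut $\mu_p$ ($p\in L$) is an up-set on $B$ and prove $\mu$ is isotone. Take $x,y\in B$ with $x\leq y$; I need $\mu(x)\leq\mu(y)$. The natural move is to evaluate $\mu$ at the distinguished point $x$ itself: set $p:=\mu(x)$. Then trivially $x\in\mu_p$, because $\mu(x)\geq p=\mu(x)$. By hypothesis $\mu_p$ is an up-set on $B$, and since $x\leq y$ we conclude $y\in\mu_p$, which by definition of the cut means $\mu(y)\geq p=\mu(x)$. Since $x,y$ were arbitrary with $x\leq y$, $\mu$ is an $L$-valued up-set on $B$.

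The argument is short and I do not expect a serious obstacle; the only subtlety worth flagging is that the "$p\mapsto\mu_p$" correspondence is only surjective onto the cuts, not onto all of $L$, so in the converse direction one must use a cut determined by a value actually taken by $\mu$ — here $p=\mu(x)$ — rather than argue abstractly over all $p\in L$. This is precisely the standard "pointwise synthesis" trick used in the theory of cuts of lattice valued functions, and it is why the finiteness or Booleanness of $B$ plays no role: the statement holds for $\mu$ with any ordered domain, and for $B$ a Boolean lattice in particular. Thus both implications follow, completing the proof. \qed
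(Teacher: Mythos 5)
Your proof is correct and is exactly the standard argument: the paper itself omits the proof of this lemma (citing \cite{JMST1}), and both directions you give --- isotonicity passing to cuts directly, and the converse via the cut at $p=\mu(x)$ --- are the expected ones. Nothing to add.
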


\subsection{Canonical representation of lattice valued functions}\label{canr}

We shall use the following properties of the collection of cuts (for proofs that are not presented, see \cite{BSAT1}). In this section $B$ is a nonempty set, not necessarily equipped with operations or relations.

 Let $\mu:B\rightarrow L$ be an $L$-valued Boolean function and $(\mu_L, \leq)$ the poset with  $\mu_L=\{\mu_p\mid p\in L\}$ (the collection of cuts of $\mu$) and the order $\,\leq\,$ is the inverse  of the set-inclusion: for $\mu_p,\mu_q\in\mu_L$,
  \[\mu_p\leq\mu_q\;\mbox{ if and only if }\;\mu_q\subseteq\mu_p.\]
  \begin{lemma}\sl  $(\mu_L, \leq)$ is a complete lattice and for every collection $\{\mu_p\mid p\in L_1\}$, $L_1\subseteq L$ of cuts of $\mu$, we have
  \begin{equation}\bigcap\{\mu_p\mid p\in L_1\}=\mu_{{\vee} (p\mid p\in L_1)}.\label{pres}\end{equation}\end{lemma}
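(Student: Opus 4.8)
The plan is to verify the two assertions of the lemma directly from the definition of cuts as inverse images of principal filters, given in~(\ref{invim}). First I would establish that $(\mu_L,\leq)$ is a complete lattice. Since Lemma~\ref{intr} already tells us that $\mu_L$ is a closure system on $B$ under set-inclusion, and since every closure system is a complete lattice under set-inclusion, the poset $(\mu_L,\leq)$ — which carries the \emph{reverse} of set-inclusion — is the order-dual of a complete lattice, hence itself a complete lattice. This is the cheap half; the only care needed is to note that arbitrary intersections of cuts are again cuts, which is exactly the closure-system property, and that in the dual order intersection becomes join.

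The substantive part is formula~(\ref{pres}). The natural approach is a chain of iff's at the level of elements of $B$. Fix $x\in B$. Then $x\in\bigcap\{\mu_p\mid p\in L_1\}$ iff $x\in\mu_p$ for every $p\in L_1$, iff $\mu(x)\geq p$ for every $p\in L_1$, which says precisely that $\mu(x)$ is an upper bound of $\{p\mid p\in L_1\}$ in $L$, iff $\mu(x)\geq\bigvee\{p\mid p\in L_1\}$ — here one uses completeness of $L$ so that the join exists and the universal-upper-bound characterization of the supremum applies — iff $x\in\mu_{\bigvee(p\mid p\in L_1)}$ by the definition of the cut at $\bigvee\{p\mid p\in L_1\}$. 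Stringing these equivalences together gives the claimed set equality. One should also remark on the boundary case $L_1=\emptyset$: then the left side is $B$ (the empty intersection), the join on the right is the bottom $0$ of $L$, and $\mu_0=\{x\mid\mu(x)\geq 0\}=B$, so the formula still holds; this is consistent with $B\in\mu_L$, which the closure-system statement also guarantees.

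Finally I would tie the two parts together by observing that~(\ref{pres}) identifies the meet operation in $(\mu_L,\leq)$ concretely: since the order on $\mu_L$ is reverse inclusion, the meet of a family of cuts is their \emph{intersection}, and~(\ref{pres}) says this intersection is again a cut, namely the cut at the join in $L$ of the corresponding parameters. The main obstacle, such as it is, is purely bookkeeping: keeping straight that ``larger cut'' means ``smaller set'' under $\leq$, so that the \emph{join} in $L$ of the parameters corresponds to the \emph{meet} (intersection) in $\mu_L$ of the cuts, rather than the other way around. No genuine difficulty arises beyond this order-reversal bookkeeping, since completeness of $L$ is exactly what makes the supremum $\bigvee\{p\mid p\in L_1\}$ available for the argument.
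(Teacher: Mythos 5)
Your proposal is correct. The paper itself does not print a proof of this lemma (it defers to the reference \cite{BSAT1}), and your argument is the standard one that is intended there: completeness of $(\mu_L,\leq)$ follows from Lemma \ref{intr} plus the fact that the order-dual of a complete lattice is complete, and the identity (\ref{pres}) follows from the element-wise chain of equivalences $x\in\bigcap\{\mu_p\mid p\in L_1\}$ iff $\mu(x)\geq p$ for all $p\in L_1$ iff $\mu(x)\geq\bigvee(p\mid p\in L_1)$, which uses exactly the least-upper-bound characterization of the supremum in the complete lattice $L$; your treatment of the empty-family case is also right.
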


  Given an $L$-valued Boolean function $\mu :B\rightarrow L$, we define a relation $\approx$ on $L$: for $p,q\in L$
  \begin{equation} p\approx q\; \mbox{ if and only if }\; \mu_p=\mu_q.\label{tilda}\end{equation}
  The following is straightforward by (\ref{invim}).
  \begin{lemma}\sl The relation $\approx$ is an equivalence on $L$, and
  \begin{equation} p\approx q\; \mbox{ if and only if }\; {\uparrow}p\cap\mu (B)= {\uparrow}q\cap\mu (B),\label{tilda1}\end{equation}
  where $\mu (B)=\{ r\in L\mid r=\mu (x)\,\mbox{ for some } \,x\in B\}$. \end{lemma}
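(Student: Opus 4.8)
The plan is to handle the two assertions of the lemma in turn; neither should present any genuine difficulty. For the claim that $\approx$ is an equivalence, I would simply note that by (\ref{tilda}) the relation $\approx$ is the pullback of the equality relation on subsets of $B$ along the map $p\mapsto\mu_p$. Hence reflexivity, symmetry and transitivity of $\approx$ are inherited directly from those of set equality: $\mu_p=\mu_p$ gives $p\approx p$; $\mu_p=\mu_q$ gives $\mu_q=\mu_p$, so $q\approx p$; and $\mu_p=\mu_q$ together with $\mu_q=\mu_r$ gives $\mu_p=\mu_r$, so $p\approx r$.

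For the characterization (\ref{tilda1}), the key elementary observation is that for every $x\in B$ and every $p\in L$ we have $x\in\mu_p$ if and only if $\mu(x)\in{\uparrow}p$, by (\ref{invim}); and since $\mu(x)\in\mu(B)$ holds automatically, this is equivalent to $\mu(x)\in{\uparrow}p\cap\mu(B)$. In other words, $\mu_p$ is precisely the $\mu$-preimage of the set ${\uparrow}p\cap\mu(B)$.

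From this, the direction $(\Leftarrow)$ is immediate: if ${\uparrow}p\cap\mu(B)={\uparrow}q\cap\mu(B)$, then their $\mu$-preimages coincide, i.e.\ $\mu_p=\mu_q$, so $p\approx q$. For $(\Rightarrow)$, I would assume $\mu_p=\mu_q$ and take an arbitrary $r\in{\uparrow}p\cap\mu(B)$, writing $r=\mu(x)$ for some $x\in B$. Then $r\geq p$ yields $x\in\mu_p=\mu_q$, hence $\mu(x)\geq q$, that is $r\geq q$, so $r\in{\uparrow}q\cap\mu(B)$. The reverse inclusion follows by the same argument with the roles of $p$ and $q$ interchanged, giving ${\uparrow}p\cap\mu(B)={\uparrow}q\cap\mu(B)$.

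The only subtlety worth flagging is that the relation $\approx$ is defined on all of $L$, whereas the cut sets only ``see'' the values actually attained by $\mu$; the reduction of $\mu_p$ to the preimage of ${\uparrow}p\cap\mu(B)$ is exactly what accounts for this, so I do not expect any real obstacle.
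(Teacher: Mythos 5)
Your proof is correct and is exactly the argument the paper has in mind: the paper gives no explicit proof, stating only that the lemma "is straightforward by (\ref{invim})", and your observation that $\mu_p$ is the $\mu$-preimage of ${\uparrow}p\cap\mu(B)$ is precisely that straightforward argument spelled out. Both directions of (\ref{tilda1}) and the equivalence-relation claim are handled correctly.
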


  We denote by $L/{\approx}$ the collection of equivalence classes under $\approx$. By (\ref{pres}), supremum of each $\approx$-class belongs to the class, as its greatest element:
  \begin{equation}\bigvee [p]_{\approx}\in [p]_{\approx}. \label{suprk}\end{equation}
  In particular, we have that for every $x\in B$
  \begin{equation} \mu (x)=\bigvee  [\mu (x)]_{\approx}.\label{slik}\end{equation}

  \begin{lemma}\sl
  The mapping $p\mapsto\bigvee  [p]_{\approx}$ is a closure operator on $L$. \end{lemma}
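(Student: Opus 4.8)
The plan is to verify the three defining properties of a closure operator for the map $c:L\to L$ given by $c(p)=\bigvee [p]_{\approx}$: namely extensivity ($p\leq c(p)$), monotonicity ($p\leq q$ implies $c(p)\leq c(q)$), and idempotency ($c(c(p))=c(p)$). Extensivity is immediate, since $\bigvee [p]_{\approx}$ is by definition an upper bound of $[p]_{\approx}$ and $p$ lies in its own class. Idempotency follows once we know that $\bigvee [p]_{\approx}$ belongs to $[p]_{\approx}$, which is exactly (\ref{suprk}): then $[c(p)]_{\approx}=[p]_{\approx}$, so $c(c(p))=\bigvee [c(p)]_{\approx}=\bigvee [p]_{\approx}=c(p)$.

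The only step requiring genuine argument is monotonicity. First I would reduce to a statement about cuts: since the $\approx$-class of $r$ is determined by the cut $\mu_r$ (by (\ref{tilda})) and $c(r)\approx r$, we have $\mu_{c(p)}=\mu_p$ and $\mu_{c(q)}=\mu_q$. Now if $p\leq q$, then from the basic monotonicity of cuts we get $\mu_q\subseteq\mu_p$, hence $\mu_{c(q)}\subseteq\mu_{c(p)}$. It remains to pass from this inclusion of cuts back to the inequality $c(p)\leq c(q)$ in $L$. For this I would use the characterization (\ref{tilda1}): writing $M=\mu(B)$, the element $c(p)=\bigvee [p]_{\approx}$ is the greatest element $r$ of $L$ with ${\uparrow}r\cap M={\uparrow}p\cap M$, and likewise for $c(q)$. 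Since $\mu_q\subseteq\mu_p$ translates to ${\uparrow}q\cap M\subseteq {\uparrow}p\cap M$, every element of $M$ above $p$ is also above... — more carefully, ${\uparrow}c(q)\cap M={\uparrow}q\cap M\subseteq{\uparrow}p\cap M={\uparrow}c(p)\cap M$, and since $c(p)=\bigvee\{r\in L\mid {\uparrow}r\cap M\subseteq{\uparrow}p\cap M\}$ — indeed $c(p)$ is the top of its class — the element $c(q)$ lies in this set, so $c(q)\leq c(p)$. This is the wrong direction, so instead one argues: $c(p)\leq c(q)$ because $[c(p)]_\approx = [p]_\approx$ has top $c(p)$, and we must show $c(p)$ lies in a class whose top is $\geq\ldots$; the clean route is to observe directly that $c(p)=\bigvee(\mu^{-1}(\uparrow p)$-related elements$)$ and use (\ref{pres}) applied to $L_1=[p]_\approx$, giving $\mu_{c(p)}=\bigcap\{\mu_r\mid r\in[p]_\approx\}=\mu_p$, then appeal to antitonicity of $r\mapsto\mu_r$ together with the fact that $\mu_{c(q)}\subseteq\mu_{c(p)}$ forces $c(p)\leq c(q)$ since each $c$-value is the largest preimage of its cut.

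I expect the main obstacle to be bookkeeping the order-reversal between $L$ and the lattice of cuts without sign errors: the cut operation $r\mapsto\mu_r$ is antitone, the canonical map $r\mapsto\bigvee[r]_{\approx}$ is isotone, and one must compose them correctly. The cleanest formulation, which I would adopt in the write-up, is: $c(p)\leq c(q)$ $\iff$ $\mu_{c(q)}\subseteq\mu_{c(p)}$ (because $c(p),c(q)$ are the greatest elements of their classes, so $c(p)\leq c(q)$ is equivalent to $c(q)\in[\,\cdot\,]$ with the right cut, i.e. to the corresponding cut inclusion), and the latter is just $\mu_q\subseteq\mu_p$, which holds since $p\leq q$. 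Everything else is a routine unwinding of the definitions (\ref{tilda})–(\ref{slik}).
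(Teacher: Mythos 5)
The paper itself gives no proof of this lemma (it is one of the facts for which the reader is referred to the cited overview paper), so I am judging your argument on its own. Your treatment of extensivity and idempotency is correct and complete: $p\in[p]_{\approx}$ gives $p\leq\bigvee[p]_{\approx}$, and (\ref{suprk}) gives $[\bigvee[p]_{\approx}]_{\approx}=[p]_{\approx}$, hence idempotency. The problem is monotonicity, which you correctly identify as the only substantive step but never actually close. Your final formulation rests on the equivalence ``$c(p)\leq c(q)\iff\mu_{c(q)}\subseteq\mu_{c(p)}$''; the forward direction is just antitonicity of cuts, but the backward direction is precisely what has to be proved, and the parenthetical you offer for it (``$c(q)\in[\,\cdot\,]$ with the right cut'') is not an argument. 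The earlier attempt in the same paragraph visibly goes in the wrong direction and is abandoned mid-sentence, so as written the proof of monotonicity is a gap.

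The fix is one line and uses exactly the ingredients you already name. From $p\leq q$ you get $\mu_{c(q)}=\mu_q\subseteq\mu_p=\mu_{c(p)}$. Now apply (\ref{pres}) to the pair $\{c(p),c(q)\}$: $\mu_{c(p)\vee c(q)}=\mu_{c(p)}\cap\mu_{c(q)}=\mu_{c(q)}=\mu_q$, so $c(p)\vee c(q)\in[q]_{\approx}$. Since by (\ref{suprk}) the element $c(q)=\bigvee[q]_{\approx}$ is the greatest element of $[q]_{\approx}$, this forces $c(p)\vee c(q)\leq c(q)$, i.e.\ $c(p)\leq c(q)$. (Equivalently, one can first observe that $c(p)=\bigvee\{r\in L\mid \mu_p\subseteq\mu_r\}$, after which monotonicity is immediate from $\mu_q\subseteq\mu_p$; but that identity again needs the same (\ref{pres}) computation.) I would rewrite the monotonicity paragraph around this single chain and delete the false start.
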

   The quotient $L/{\approx}$ can be ordered by the relation $\leq_{L/{\approx}}$ defined as follows:
  \[[p]_{\approx}\leq_{L/{\approx}} [q]_{\approx}\;\mbox{ if and only if } \; {\uparrow}q\cap\mu (B)\subseteq {\uparrow}p\cap\mu (B).\]
  The order $\leq_{L/{\approx}}$ of classes in $L/{\approx}$ corresponds to the order of suprema of classes in $L$ (we denote the order in $L$ by $\leq_L$):
  \begin{prop}\label{appcl}\sl The poset $(L/{\approx},\leq_{L/{\approx}})$ is a complete lattice fulfilling:

  $(i)\;$ $[p]_{\approx}\leq_{L/{\approx}} [q]_{\approx}\;\mbox{ if and only if } \; \bigvee [p]_{\approx}\leq_L \bigvee [q]_{\approx}.$

  $(ii)\;$ The mapping $[p]_{\approx}\mapsto\bigvee [p]_{\approx}$ is an injection of $L/{\approx}$ into $L$.
\end{prop}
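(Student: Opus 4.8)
The plan is to establish Proposition~\ref{appcl} by transporting structure along the supremum map $s\colon[p]_{\approx}\mapsto\bigvee[p]_{\approx}$, which by \eqref{suprk} is well-defined with values in $L$, and by \eqref{slik} hits every element of $\mu(B)$. First I would observe that $s$ is injective: if $\bigvee[p]_{\approx}=\bigvee[q]_{\approx}$, then since each supremum lies in its own class by \eqref{suprk}, the two classes share an element, hence coincide. That gives part $(ii)$ immediately once we know the domain is a poset, and it also lets us identify $L/{\approx}$ with the image $s(L/{\approx})\subseteq L$.

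Next I would prove $(i)$, the compatibility of $\leq_{L/{\approx}}$ with $\leq_L$ under $s$. The key link is \eqref{tilda1}: $p\approx q$ iff ${\uparrow}p\cap\mu(B)={\uparrow}q\cap\mu(B)$, so each class is determined by the set ${\uparrow}p\cap\mu(B)$, and $\leq_{L/{\approx}}$ is by definition reverse inclusion of these traces. On the other hand, the closure operator $c\colon p\mapsto\bigvee[p]_{\approx}$ of the penultimate lemma satisfies $c(p)\in[p]_{\approx}$, so ${\uparrow}c(p)\cap\mu(B)={\uparrow}p\cap\mu(B)$; combined with the fact that $c(p)\in\mu(B)$ whenever — more simply — one uses that $c(p)=\bigwedge({\uparrow}p\cap\mu(B))$ in the relevant sense, one shows $c(p)\leq_L c(q)$ is equivalent to ${\uparrow}c(q)\cap\mu(B)\subseteq{\uparrow}c(p)\cap\mu(B)$. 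Unwinding, $\bigvee[p]_{\approx}\leq_L\bigvee[q]_{\approx}$ iff ${\uparrow}q\cap\mu(B)\subseteq{\uparrow}p\cap\mu(B)$ iff $[p]_{\approx}\leq_{L/{\approx}}[q]_{\approx}$. In particular $\leq_{L/{\approx}}$ is a genuine partial order (antisymmetry follows from injectivity of $s$ plus $(i)$).

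It then remains to see that $(L/{\approx},\leq_{L/{\approx}})$ is a complete lattice. Here I would use the already-established isomorphism $(\mu_L,\leq)\cong L/{\approx}$: the map $\mu_p\mapsto[p]_{\approx}$ is well-defined and bijective by definition \eqref{tilda} of $\approx$, and it is order-preserving in both directions since $\mu_p\leq\mu_q$ (i.e.\ $\mu_q\subseteq\mu_p$) translates via \eqref{invim} to ${\uparrow}q\cap\mu(B)\subseteq{\uparrow}p\cap\mu(B)$, which is exactly $[p]_{\approx}\leq_{L/{\approx}}[q]_{\approx}$. Since $(\mu_L,\leq)$ is a complete lattice by the earlier lemma (with meets given by \eqref{pres}), so is $L/{\approx}$, and one can even write down suprema and infima explicitly: $\bigvee_i[p_i]_{\approx}=[\bigvee_i p_i]_{\approx}$ and $\bigwedge_i[p_i]_{\approx}$ corresponds under this isomorphism to $\bigcap_i\mu_{p_i}$.

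The main obstacle, modest as it is, is the bookkeeping in step two: one must be careful to distinguish the three orders in play ($\leq_L$ on $L$, $\leq$ on $\mu_L$, and $\leq_{L/{\approx}}$ on classes) and to invoke \eqref{tilda1} rather than na\"ively comparing representatives, since $p\leq_L q$ does \emph{not} in general imply $[p]_{\approx}\leq_{L/{\approx}}[q]_{\approx}$ unless one first replaces $p,q$ by the canonical representatives $\bigvee[p]_{\approx},\bigvee[q]_{\approx}$. Everything else is a routine assembly of facts already recorded in the excerpt.
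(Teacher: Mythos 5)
The paper itself gives no proof of this proposition --- it is one of the facts quoted from \cite{BSAT1} with the remark ``for proofs that are not presented, see \cite{BSAT1}'' --- so there is nothing to compare line by line; judged on its own, your argument is essentially correct and self-contained. The injectivity argument via (\ref{suprk}) is exactly right, and deriving completeness from the order isomorphism $\mu_p\mapsto[p]_{\approx}$ with $(\mu_L,\leq)$ is legitimate and non-circular, since you verify that isomorphism directly from (\ref{tilda}), (\ref{tilda1}) and (\ref{invim}) rather than citing the later Proposition~\ref{obem}. Three small repairs are needed. First, the crux of the backward direction of $(i)$ is the identity $\bigvee[p]_{\approx}=\bigwedge\bigl({\uparrow}p\cap\mu(B)\bigr)$, which you invoke only ``in the relevant sense''; it is true but deserves its one-line proof (every $v\in{\uparrow}p\cap\mu(B)={\uparrow}\bigvee[p]_{\approx}\cap\mu(B)$ lies above $\bigvee[p]_{\approx}$, giving one inequality, while the meet $m$ of that set satisfies ${\uparrow}m\cap\mu(B)={\uparrow}p\cap\mu(B)$, hence $m\approx p$ and $m\leq\bigvee[p]_{\approx}$). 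Alternatively one can bypass it: ${\uparrow}q\cap\mu(B)\subseteq{\uparrow}p\cap\mu(B)$ gives $\mu_q\subseteq\mu_p$, so $\mu_{p\vee q}=\mu_p\cap\mu_q=\mu_q$ by (\ref{pres}), whence $p\leq p\vee q\leq\bigvee[q]_{\approx}$, and applying this to the canonical representative of $[p]_{\approx}$ yields $\bigvee[p]_{\approx}\leq_L\bigvee[q]_{\approx}$. Second, your aside identifying $\bigwedge_i[p_i]_{\approx}$ with $\bigcap_i\mu_{p_i}$ is backwards: since $\leq$ on $\mu_L$ is reverse inclusion, $\bigcap_i\mu_{p_i}=\mu_{\bigvee_i p_i}$ is the \emph{supremum} of the $\mu_{p_i}$, consistent with your (correct) formula $\bigvee_i[p_i]_{\approx}=[\bigvee_i p_i]_{\approx}$; the infimum is the intersection of all cuts containing $\bigcup_i\mu_{p_i}$. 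Third, the closing caveat is inverted: $p\leq_L q$ \emph{does} always imply $[p]_{\approx}\leq_{L/{\approx}}[q]_{\approx}$ (as ${\uparrow}q\subseteq{\uparrow}p$); what fails for arbitrary representatives is the converse implication, which is precisely why $(i)$ is stated for the suprema of the classes. None of these affects the validity of the main argument.
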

\begin{cor}\sl The sub-poset $(\bigvee [p]_{\approx},\leq_L)$ of $L$ is a lattice isomorphic under $\bigvee [p]_{\approx}\mapsto [p]_{\approx}$ with the lattice $(L/{\approx},\leq_{L/{\approx}})$.
\end{cor}

Next we connect the lattice $(L/{\approx},\leq_{L/{\approx}})$ (Proposition \ref{appcl}) and the lattice $(\mu_L,\leq )$ of cuts of $\mu$; recall that the latter is ordered dually to inclusion.

\begin{prop}\label{obem}\sl Let $\mu:B\rightarrow L$ be an $L$-valued function on $B$. Its lattice of cuts $(\mu_L, \leq)$ is isomorphic with the lattice  $(L/{\approx},\leq_{L/{\approx}})$ of $\approx$-classes in $L$ under the mapping $\mu_p\mapsto [p]_{\approx}$. \end{prop}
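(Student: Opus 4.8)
The plan is to exhibit the assignment $\varphi\colon\mu_p\mapsto[p]_{\approx}$ as a well-defined bijection between $\mu_L$ and $L/{\approx}$ which both preserves and reflects the order. Since $(\mu_L,\leq)$ is a complete lattice (it is a closure system by Lemma~\ref{intr}, now taken with the dual order) and $(L/{\approx},\leq_{L/{\approx}})$ is a complete lattice by Proposition~\ref{appcl}, any order isomorphism between them is automatically a lattice isomorphism; thus there is no need to verify compatibility with $\wedge$ and $\vee$ separately, and the whole task reduces to bijectivity plus the order equivalence.

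First I would dispose of well-definedness, injectivity and surjectivity, all of which are immediate from the defining equivalence (\ref{tilda}) of $\approx$. Indeed, for $p,q\in L$ we have $\mu_p=\mu_q$ if and only if $p\approx q$, i.e.\ if and only if $[p]_{\approx}=[q]_{\approx}$; reading this from left to right shows that the value $[p]_{\approx}$ does not depend on the choice of $p$ representing the cut $\mu_p$ (well-definedness), and reading it from right to left gives injectivity. Surjectivity is clear, since every class of $L/{\approx}$ has the form $[p]_{\approx}$ for some $p\in L$, and that class is exactly $\varphi(\mu_p)$.

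The core of the argument is the order correspondence. Using the description $\mu_p=\mu^{-1}({\uparrow}p)$ from (\ref{invim}), for $p,q\in L$ the inclusion $\mu_q\subseteq\mu_p$ says precisely that for every $x\in B$, $\mu(x)\geq q$ implies $\mu(x)\geq p$; as this condition involves only the values actually attained by $\mu$, it is equivalent to ${\uparrow}q\cap\mu(B)\subseteq{\uparrow}p\cap\mu(B)$, in the spirit of (\ref{tilda1}). Recalling that the order $\leq$ on $\mu_L$ is the reverse of inclusion, $\mu_p\leq\mu_q$ means $\mu_q\subseteq\mu_p$, hence means ${\uparrow}q\cap\mu(B)\subseteq{\uparrow}p\cap\mu(B)$, which is exactly the defining condition of $[p]_{\approx}\leq_{L/{\approx}}[q]_{\approx}$. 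Therefore $\mu_p\leq\mu_q$ if and only if $\varphi(\mu_p)\leq_{L/{\approx}}\varphi(\mu_q)$, so $\varphi$ is an order embedding, and being bijective, an order (hence lattice) isomorphism.

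I do not expect a genuine obstacle here: the statement is essentially a bookkeeping identification of two presentations of the same structure. The one place to be careful is the passage, in the previous paragraph, from quantification over all $x\in B$ to quantification over $r\in\mu(B)$, and keeping the two dually oriented orders — reverse inclusion on cuts versus the $\mu(B)$-filtered inclusion on $\approx$-classes — correctly aligned so that $\varphi$ comes out order-preserving rather than order-reversing.
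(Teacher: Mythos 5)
The paper itself gives no proof of Proposition~\ref{obem} (it is among the cut properties cited from \cite{BSAT1}), so there is no in-paper argument to compare against; your proof is correct and is the natural one. Well-definedness, injectivity and surjectivity of $\mu_p\mapsto[p]_{\approx}$ do follow directly from (\ref{tilda}), and your chain $\mu_p\leq\mu_q\Leftrightarrow\mu_q\subseteq\mu_p\Leftrightarrow{\uparrow}q\cap\mu(B)\subseteq{\uparrow}p\cap\mu(B)\Leftrightarrow[p]_{\approx}\leq_{L/{\approx}}[q]_{\approx}$ correctly aligns the reverse-inclusion order on cuts with the definition of $\leq_{L/{\approx}}$ (the middle equivalence being exactly the quantifier passage from $x\in B$ to $r\in\mu(B)$ underlying (\ref{tilda1})), after which the observation that a bijective order embedding of complete lattices is a lattice isomorphism finishes the argument.
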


  We introduce the mapping $\widehat{\mu} :B\rightarrow \mu_L$ by the construction (\ref{fsynth}), i.e., by
\begin{equation} \widehat{\mu} (x):=\bigcap\{ \mu_p\in\mu_L\mid x\in\mu_p\}. \label{can}\end{equation}

As in \cite{BSAT1}, we say that the lattice valued function $\widehat{\mu}$ is the \textbf{canonical representation} of $\mu$. Observe that its cuts are of the form $\widehat{\mu}_f$, $f=\mu_f\in\mu_L$; hence, its collection of cuts is \[\widehat{\mu}_{\mu_L}=\{ F\subseteq B\mid F=\widehat{\mu}_f, \mbox{ for some  }f\in\mu_L\}.\]
\begin{prop}\label{izok} \sl  The lattices of cuts of a lattice valued function $\mu$ and of its canonical representation $\widehat{\mu}$ coincide.
\end{prop}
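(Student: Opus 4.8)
The plan is to prove the stronger statement that the two collections of cut sets are literally the same family of subsets of $B$; coincidence of the lattices (each ordered dually to set-inclusion) then follows at once.

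First I would note that $\widehat{\mu}$ is precisely the function furnished by Proposition \ref{synth} applied to the closure system $\mathcal{F}=\mu_L$, the latter being a closure system on $B$ by Lemma \ref{intr}. Indeed, comparing (\ref{fsynth}) with (\ref{can}) (and with (\ref{repkan}) of Lemma \ref{malalema}), the value $\widehat{\mu}(x)$ equals $\bigcap\{\mu_p\in\mu_L\mid x\in\mu_p\}$, i.e. the closure $\overline{x}$ formed with respect to $\mathcal{F}=\mu_L$.

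The core step is then to evaluate, for an arbitrary $f\in\mu_L$, the cut $\widehat{\mu}_f$ of $\widehat{\mu}$. Here one must keep in mind that the co-domain of $\widehat{\mu}$ is $\mu_L$ ordered \emph{dually} to inclusion, so that $\widehat{\mu}(x)\geq f$ in $\mu_L$ means $\widehat{\mu}(x)\subseteq f$ as subsets of $B$; hence $\widehat{\mu}_f=\{x\in B\mid \widehat{\mu}(x)\subseteq f\}$. If $x\in f$, then $f$ is one of the sets intersected in (\ref{can}), so $\widehat{\mu}(x)\subseteq f$; conversely, $x\in\widehat{\mu}(x)$ by Lemma \ref{malalema}\,$b)$, so $\widehat{\mu}(x)\subseteq f$ forces $x\in f$. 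Thus $\widehat{\mu}_f=f$ for every $f\in\mu_L$ --- which is exactly what the Remark following Proposition \ref{synth} records.

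Finally, since the collection of all cuts of $\widehat{\mu}$ is $\widehat{\mu}_{\mu_L}=\{\widehat{\mu}_g\mid g\in\mu_L\}$, the previous paragraph yields $\widehat{\mu}_{\mu_L}=\{g\mid g\in\mu_L\}=\mu_L$, the collection of cuts of $\mu$. Both families being ordered in the same way, the lattices of cuts coincide. I do not anticipate a genuine obstacle here: the argument is essentially bookkeeping built on Proposition \ref{synth} and its Remark, and the only point that needs attention is getting the direction of the order on the co-domain $\mu_L$ right when forming the cuts of $\widehat{\mu}$.
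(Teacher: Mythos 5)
Your proposal is correct and follows essentially the same route as the paper: the paper's proof consists precisely of the assertion $\widehat{\mu}_{\mu_p}=\mu_p$ for every cut $\mu_p$, which you verify in detail (correctly handling the dual order on the co-domain $\mu_L$). No gaps.
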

\begin{proof} Indeed, for every cut $\mu_p$ of $\mu$, for the corresponding cut $\widehat{\mu}_{\mu_p}$ of $\widehat{\mu}$ we have that  $\widehat{\mu}_{\mu_p}=\mu_p$.
\end{proof}

Observe that the lattice valued function $\mu$ defined by (\ref{fsynth}) in the proof of Proposition \ref{synth}, coincides with its canonical representation, i.e., in this case we have  $\mu=\widehat{\mu}$.

Using the above properties of cuts and of the corresponding lattices, we finally give a consequence which is relevant to our present investigation.

\begin{prop}\label{homom}\sl If $\mu:B\rightarrow L$ is an $L$-valued function on $B$ and
$\mu (a)=\mu (b)\vee\mu (c)$
for some $a,b,c\in B$, then also for the canonical representation $\widehat{\mu}$ of $\mu$ analogously holds
$\;\widehat{\mu}(a)=\widehat{\mu}(b)\vee\widehat{\mu}(c).
$
\end{prop}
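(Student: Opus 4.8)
The plan is to make the canonical representation explicit and then reduce the claim to an easy computation of joins of cuts. First I would show that for every $x\in B$ the value $\widehat{\mu}(x)$ is precisely the cut $\mu_{\mu(x)}$: by (\ref{can}), $\widehat{\mu}(x)=\bigcap\{\mu_p\in\mu_L\mid x\in\mu_p\}$, and by (\ref{invim}) the membership $x\in\mu_p$ is equivalent to $p\le\mu(x)$, so $\widehat{\mu}(x)=\bigcap\{\mu_p\mid p\in L,\ p\le\mu(x)\}$; since $\mu(x)$ is the greatest element of the index set $\{p\in L\mid p\le\mu(x)\}$, its supremum in $L$ equals $\mu(x)$, and (\ref{pres}) then gives $\widehat{\mu}(x)=\mu_{\mu(x)}$.

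Next I would record the formula for the join in the lattice $(\mu_L,\le)$ of cuts of $\mu$, recalling that $(\mu_L,\le)$ is ordered dually to set-inclusion. For $p,q\in L$, formula (\ref{pres}) gives $\mu_p\cap\mu_q=\mu_{p\vee q}$; hence $\mu_{p\vee q}$ is a cut contained in both $\mu_p$ and $\mu_q$, so it is an upper bound of $\{\mu_p,\mu_q\}$ in $(\mu_L,\le)$, and as every cut contained in both $\mu_p$ and $\mu_q$ is contained in $\mu_p\cap\mu_q=\mu_{p\vee q}$, it is the least such upper bound. Thus $\mu_p\vee\mu_q=\mu_{p\vee q}$, with the join taken in $\mu_L$.

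Combining these two observations and using the hypothesis $\mu(a)=\mu(b)\vee\mu(c)$, I would conclude
\[\widehat{\mu}(b)\vee\widehat{\mu}(c)=\mu_{\mu(b)}\vee\mu_{\mu(c)}=\mu_{\mu(b)\vee\mu(c)}=\mu_{\mu(a)}=\widehat{\mu}(a),\]
which is the assertion.

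I do not expect a real obstacle: the statement is a purely formal consequence of the definition (\ref{can}) of $\widehat{\mu}$ and of the intersection formula (\ref{pres}). The only point demanding attention is the bookkeeping with the dual order on $\mu_L$ in the second step — one has to be sure that the intersection of two cuts is their \emph{join} in $(\mu_L,\le)$, so that the displayed chain of equalities really computes the join in the codomain of $\widehat{\mu}$ and not a meet. As an alternative, one could transport the problem to the lattice $L/{\approx}$ via the isomorphism of Proposition \ref{obem} and verify $[\mu(b)\vee\mu(c)]_{\approx}=[\mu(b)]_{\approx}\vee[\mu(c)]_{\approx}$ there, but the direct route above is shorter.
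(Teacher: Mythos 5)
Your proof is correct, and it takes a genuinely more direct route than the paper. The paper argues structurally: it invokes Proposition \ref{izok} to identify the cut lattices of $\mu$ and $\widehat{\mu}$, then chains the isomorphisms $(\mu_L,\leq)\cong(L/{\approx},\leq_{L/{\approx}})\cong(\bigvee[p]_{\approx},\leq_L)$ and uses (\ref{slik}) to observe that all values $\mu(x)$ live in the sub-poset of class suprema, so the equality $\mu(a)=\mu(b)\vee\mu(c)$ restricts there and transfers through the isomorphisms to $\widehat{\mu}$. You instead bypass the isomorphism machinery entirely by first establishing the explicit identity $\widehat{\mu}(x)=\mu_{\mu(x)}$ (a clean consequence of (\ref{invim}) and (\ref{pres}) that the paper never states but which is worth having) and then computing the join in $(\mu_L,\leq)$ directly as $\mu_p\vee\mu_q=\mu_p\cap\mu_q=\mu_{p\vee q}$. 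Both arguments ultimately rest on the same intersection formula (\ref{pres}); yours is shorter and self-contained, while the paper's reuses structural results from Section \ref{canr} that it needs elsewhere anyway. Your attention to the dual order on $\mu_L$ in the second step is exactly the right place to be careful, and you handle it correctly; the alternative via Proposition \ref{obem} that you sketch at the end is essentially the paper's own proof.
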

\begin{proof} By Proposition \ref{izok}, the lattices of cuts $(\mu_L,\leq )$ and $(\widehat{\mu}_{\mu_L},\leq )$ of $\mu$ and of its canonical representation $\widehat{\mu}$ respectively, coincide. The co-domain lattice of $\widehat{\mu}$ is moreover $(\mu_L,\leq )$. By Proposition \ref{obem}, $(\mu_L,\leq )$ is isomorphic with the lattice $(L/{\approx},\leq_{L/{\approx}})$ of $\approx$-classes in $L$, and the latter is isomorphic with the lattice $(\bigvee [p]_{\approx},\leq_L)$ of maximal elements of $\approx$-classes in $L$. But the last one is a sub-poset in $L$. Now, for every $x\in B$, $\mu (x)$ is by (\ref{slik}) an element in the lattice $(\bigvee [p]_{\approx},\leq_L)$ which is a sub-poset in $L$. Therefore, if the equality $\mu (a)=\mu (b)\vee\mu (c)$ holds in $L$, then it also holds in $(\bigvee [p]_{\approx},\leq_L)$. Due to the mentioned lattice isomorphisms we have the analogue equality also for $\widehat{\mu}$.
\end{proof}
We recall that the equality $\widehat{\mu}(a)=\widehat{\mu}(b)\vee\widehat{\mu}(c)$  equivalently can be presented as
$\widehat{\mu}(a)=\widehat{\mu}(b)\cap\widehat{\mu}(c),$ since the order in $(\mu_L,\leq )$ is dual to set-inclusion; therefore the join in this lattice is actually the set intersection.

\begin{rem}\rm
The opposite implication to the one in Proposition \ref{homom} does not hold in general. Indeed, let $B=\{a,b,c,d\}$, and let  $L$ be the lattice given in Figure 1.
\begin{center}
\unitlength 1mm 
\linethickness{0.4pt}
\ifx\plotpoint\undefined\newsavebox{\plotpoint}\fi 
\begin{picture}(92,40)(0,-5)
\put(18,32){\circle{2}}
\put(18,23){\circle{2}}
\put(78,25){\circle{2}}
\put(10,15){\circle{2}}
\put(70,17){\circle{2}}
\put(17,22){\line(-1,-1){6}}
\put(77,24){\line(-1,-1){6}}
\put(19,22){\line(1,-1){6}}
\put(79,24){\line(1,-1){6}}
\put(26,15){\circle{2}}
\put(86,17){\circle{2}}
\put(11,14){\line(1,-1){6}}
\put(71,16){\line(1,-1){6}}
\put(25,14){\line(-1,-1){6}}
\put(85,16){\line(-1,-1){6}}
\put(18,31){\line(0,-1){7}}
\put(18,7){\circle{2}}
\put(78,9){\circle{2}}
\put(18,4){\makebox(0,0)[cc]{$s$}}
\put(7,15){\makebox(0,0)[cc]{$p$}}
\put(29,15){\makebox(0,0)[cc]{$q$}}
\put(21,25){\makebox(0,0)[cc]{$r$}}
\put(18,35){\makebox(0,0)[cc]{1}}
\put(7,6){\makebox(0,0)[cc]{$(L,\leq )$}}
\put(18,-2){\makebox(0,0)[cc]{Figure 1}}
\put(78,-2){\makebox(0,0)[cc]{Figure 2}}
\put(78,6){\makebox(0,0)[cc]{$\{ a,b,c,d\}$}}
\put(64,17){\makebox(0,0)[cc]{$\{ a,b\}$}}
\put(93,17){\makebox(0,0)[cc]{$\{a,c\}$}}
\put(78,29){\makebox(0,0)[cc]{$\{ a\}$}}
\put(92,9){\makebox(0,0)[cc]{$(\mathcal{F},\supseteq )$}}
\put(61,34){\line(0,-1){1}}
\put(61,33){\line(0,1){0}}
\end{picture}
\end{center}

We define an $L$-valued function $\mu:B\rightarrow L$ as follows:
\[\mu =\left(\begin{array}{cccc}a&b&c&d\\1&p&q&s\end {array}\right).\]
The cuts of $\mu$ are:

$\mu_L = \{\mu_1=\mu_r= \{a\}, \mu_p=\{a,b\}, \mu_q=\{a,c\}$,  $\mu_s=\{a,b,c,d\}\}$.

The lattice $(\mu_L,\supseteq)$ is depicted in Figure 2. The canonical representation of this lattice valued function is $\widehat{\mu}:B\rightarrow \mu_L$ and it is given by
\[\widehat{\mu} =\left(\begin{array}{cccc}a&b&c&d\\\{a\}&\{a,b\}&\{a,c\}&\{a,b,c,d\}\end {array}\right).\]
Now, observe that $\;\widehat{\mu}(a)=\widehat{\mu}(b)\vee\widehat{\mu}(c)$. However, it is not true that $\mu(a)=\mu(b)\vee\mu(c)$. \hfill$\Box$\end{rem}

\section{Threshold functions induced by complete lattices}

In this part we introduce threshold functions induced by complete lattices, and we use them for investigation  of isotone Boolean functions and for their representation.

We deal with functions over the Boolean lattice $(\{ 0,1\}^n,\leq)$, and we use the complete lattice $L$ in which the bottom and the top are (also) denoted by 0 and 1 respectively; however, it is  clear from the context whether 0 (1) is a component in some $(x_1,\ldots ,x_n)\in\{ 0,1\}^n$, or it is from $L$.\\

For $x\in\{0,1\}$, and $w\in L$, we define a  mapping $L\times\{ 0,1\}$ into $L$ denoted by "$\cdot$", as follows:
\begin{equation}w\cdot x := \left\{ \begin{array}{lll} w,& \mbox{ if } & x=1\\ 0, & \mbox{ if }& x=0. \end{array}\right. \label{skmn}
\end{equation}

A function $f:\{0,1\}^n\rightarrow \{0,1\}$ is a \textbf{lattice induced threshold function}, if there is a complete lattice $L$ and
$w_1,\ldots, w_n,t\in L$, such that

\begin{equation}f(x_1,\ldots,x_n) =1 \mbox{ if and only if }  \bigvee_{i=1}^{n} (w_i\cdot x_i) \geq t.\label{litf}\end{equation}

\begin{prop}\sl
Every lattice induced threshold function is isotone.
\end{prop}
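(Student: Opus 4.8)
The plan is to show directly that if $x \le y$ in $\{0,1\}^n$ and $f(x) = 1$, then $f(y) = 1$, using the defining property (\ref{litf}). First I would fix the complete lattice $L$ and the weights $w_1, \ldots, w_n, t \in L$ witnessing that $f$ is a lattice induced threshold function. The key observation is that the map $x \mapsto w \cdot x$ defined in (\ref{skmn}) is isotone in the second argument for each fixed $w \in L$: indeed, if $x \le x'$ with $x, x' \in \{0,1\}$, then either $x = 0$ (so $w \cdot x = 0 \le w \cdot x'$) or $x = x' = 1$ (so $w \cdot x = w = w \cdot x'$); in all cases $w \cdot x \le w \cdot x'$.

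From this componentwise monotonicity, if $x = (x_1, \ldots, x_n) \le (y_1, \ldots, y_n) = y$, then $w_i \cdot x_i \le w_i \cdot y_i$ in $L$ for every $i \in \{1, \ldots, n\}$, and hence, taking joins,
\[
\bigvee_{i=1}^n (w_i \cdot x_i) \le \bigvee_{i=1}^n (w_i \cdot y_i).
\]
Now suppose $f(x) = 1$. By (\ref{litf}) this means $\bigvee_{i=1}^n (w_i \cdot x_i) \ge t$. Combining with the inequality above and transitivity of $\le$ in $L$, we get $\bigvee_{i=1}^n (w_i \cdot y_i) \ge t$, which by (\ref{litf}) again gives $f(y) = 1$. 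Therefore $f(x) \le f(y)$ whenever $x \le y$ (the case $f(x) = 0$ being trivial since then $f(x) \le f(y)$ automatically), so $f$ is isotone.

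I do not expect any real obstacle here; the proof is a short monotonicity argument. The only point that deserves explicit mention is the elementary verification that $w \cdot x$ is order preserving in $x$ for fixed $w$, since the whole argument hinges on it. Everything else is the standard fact that joins are monotone in each argument in a complete lattice, together with transitivity of the order.
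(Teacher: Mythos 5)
Your proof is correct and follows essentially the same route as the paper: establish that $w_i\cdot x_i\leq w_i\cdot y_i$ componentwise from (\ref{skmn}), pass to the join, and conclude via transitivity that the threshold condition is preserved. The only difference is that you spell out the elementary case analysis for the monotonicity of $w\cdot x$ in $x$, which the paper leaves implicit.
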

\begin{proof}
Let $L$ be a complete lattice and $w_1,\ldots, w_n,t\in L$, and $f:\{0,1\}^n\rightarrow \{0,1\}$ a lattice induced threshold function.

Let $(x_1,x_2,\ldots,x_n)\leq (y_1,y_2,\ldots,y_n)$. Then,
for every $i$, we have  $w_i\cdot x_i\leq w_i\cdot y_i$, by the definition (\ref{skmn}).
Hence,
\[ \bigvee_{i=1}^{n} (w_i\cdot x_i ) \leq \bigvee_{i=1}^{n} (w_i\cdot y_i).\]

Therefore, if  $f(x_1,\ldots,x_n) =1$, then \[ \bigvee_{i=1}^{n} (w_i\cdot x_i)\geq t,\; \mbox{  and hence }\;
 \bigvee_{i=1}^{n} (w_i\cdot y_i)\geq t.\]

 This implies $f(y_1,\ldots,y_n) =1$ and we obtain
\[f(x_1,\ldots,x_n)\leq f(y_1,\ldots,y_n),\] which proves that $f$ is an isotone function.
\end{proof}

\begin{thm}\label{thm1n}\sl
Every isotone Boolean function is a lattice induced threshold function.
\end{thm}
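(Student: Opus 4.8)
The plan is to take an arbitrary isotone Boolean function $f:\{0,1\}^n\to\{0,1\}$ and manufacture from it a complete lattice $L$ together with weights $w_1,\dots,w_n\in L$ and a threshold $t\in L$ realizing $f$ via (\ref{litf}). By Lemma \ref{prva}, the set $F=\{x\in\{0,1\}^n\mid f(x)=1\}$ is an up-set; the natural idea is to read off the lattice structure directly from $F$. Note that $\bigvee_{i=1}^n(w_i\cdot x_i)$ depends on $x=(x_1,\dots,x_n)$ only through the set $S(x)=\{i\mid x_i=1\}$ of ``switched-on'' coordinates, and equals $\bigvee_{i\in S(x)} w_i$. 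So what I must arrange is: $\bigvee_{i\in S(x)} w_i \geq t$ exactly when $x\in F$.

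The cleanest concrete choice: let $L$ be the power set $\mathcal P(\{0,1\}^n\setminus F)$ ordered by inclusion — a complete lattice with bottom $\emptyset$ and top $\{0,1\}^n\setminus F$. For each coordinate $i\in\{1,\dots,n\}$ put $w_i := \{\,z\in \{0,1\}^n\setminus F \mid z_i=0\,\}$, i.e. the set of ``off'' vectors having the $i$-th coordinate $0$, and put the threshold $t := \{0,1\}^n\setminus F$ (the whole top element). Then for $x$ with on-set $S=S(x)$ we have $\bigvee_{i\in S} w_i = \bigcup_{i\in S}\{z\notin F\mid z_i=0\} = \{z\notin F\mid z_i=0 \text{ for some } i\in S\} = \{z\notin F\mid z\not\geq x\}$, using that $z\geq x$ iff $z_i=1$ for all $i\in S$. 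This union equals all of $\{0,1\}^n\setminus F$ precisely when there is no $z\notin F$ with $z\geq x$, i.e. precisely when $\uparrow x\subseteq F$, which — since $F$ is an up-set — is equivalent to $x\in F$. Hence $\bigvee_{i\in S(x)}w_i\geq t$ iff $x\in F$ iff $f(x)=1$, as required.

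The verification splits into the two implications. First, if $f(x)=1$ then $x\in F$, so by the up-set property every $z\geq x$ lies in $F$, hence no $z\in\{0,1\}^n\setminus F$ satisfies $z\geq x$; therefore every $z\notin F$ fails $z\geq x$, i.e. has some coordinate $i$ with $z_i=0$ while $x_i=1$, so $z\in w_i$ for that $i$ and the union $\bigcup_{i\in S(x)}w_i$ exhausts $\{0,1\}^n\setminus F=t$. Second, if $f(x)=0$ then $x\notin F$, and $x$ itself is an element of $\{0,1\}^n\setminus F$ that is $\geq x$; for every $i\in S(x)$ we have $x_i=1\neq 0$, so $x\notin w_i$, hence $x\notin\bigcup_{i\in S(x)}w_i$, so $\bigvee_{i\in S(x)}w_i\subsetneq t$ and the threshold condition fails. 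Both directions are elementary set manipulations.

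I do not expect a genuine obstacle here — the argument is a direct construction — but the point requiring care is the degenerate cases and the bookkeeping with the empty join. If $F=\{0,1\}^n$ (the constant-$1$ function) then $L=\mathcal P(\emptyset)$ is the one-element lattice, $t=\emptyset$, and (\ref{litf}) holds for all $x$, as it should. If $F=\emptyset$ (constant $0$), then $t$ is the top of $L=\mathcal P(\{0,1\}^n)$ while $\bigvee_{i\in S(x)}w_i$ never reaches it (for $x=(0,\dots,0)$ the join is the empty join $=\emptyset$; for other $x$, the vector $x\notin w_i$ for $i\in S(x)$), so $f\equiv0$, again correct. One should also state explicitly that when $S(x)=\emptyset$ the join $\bigvee_{\emptyset}$ is the bottom $0$ of $L$, consistent with (\ref{skmn}). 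Once these conventions are fixed, the proof is complete; optionally one may remark that composing with the canonical representation of Subsection \ref{canr} yields a more economical lattice, but this is not needed for the statement.
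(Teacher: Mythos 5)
Your construction is correct, but it is genuinely different from the paper's. The paper fixes, once and for all for each arity $n$, the free distributive lattice $L$ on $n$ generators, takes the weights $w_i$ to be the generators, and encodes $f$ entirely in the threshold $t=\bigwedge_{k}\bigvee_{j\in I_k}w_j$ built from the minimal elements of the up-set $F$; the verification then leans on the conjunctive-normal-form comparison criterion for elements of the free distributive lattice. You instead build the lattice from $f$ itself: $L=\mathcal{P}(\{0,1\}^n\setminus F)$, $w_i=\{z\notin F\mid z_i=0\}$, $t=$ top, and the key identity $\bigcup_{i\in S(x)}w_i=\{z\notin F\mid z\not\geq x\}$ reduces the whole argument to the observation that $\uparrow x\subseteq F$ iff $x\in F$ for an up-set $F$. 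Your route is more elementary --- it needs no facts about free lattices, and your treatment of the degenerate cases $F=\emptyset$ and $F=\{0,1\}^n$ and of the empty join is careful and correct. What it gives up is the uniformity that the paper exploits later: in the paper's proof the lattice \emph{and} the weights are the same for every isotone $f$ of arity $n$ (only $t$ varies), which is exactly what the subsequent Remark emphasizes and what Section \ref{acut} needs to define the single function $\overline{\beta}$ whose cuts capture all up-sets at once. (Your construction can be made uniform by working in $\mathcal{P}(\{0,1\}^n)$ with the fixed weights $w_i=\{z\mid z_i=0\}$ and threshold $t=\{0,1\}^n\setminus F$ --- the same argument goes through verbatim --- and this would in effect be a concrete realization of the paper's free-lattice argument; but as written your lattice depends on $F$.) For the theorem as stated, your proof is complete.
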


\begin{proof}
We  prove that for every $n\in \mathbb{N}$, there is a lattice $L$ such that every isotone Boolean function is a lattice induced threshold
function over $L$.

Let $n\in \mathbb{N}$. We take $L$ to be a free distributive lattice with $n$ generators $w_1,$ $w_2,$ $\ldots ,$ $w_n$ (the join and meet of empty set of generators are also counted here, as the bottom and the top of $L$, respectively).
Recall that every element in a free distributive lattice can be uniquely represented in a   "conjunctive normal form" by means of generators
(i.e., every element is a meet of elements of the type $\bigvee_{i\in J} w_j$, where $J\subseteq \{1,\ldots,n\}$,) see e.g., \cite{crawley}.

Therefore, for $x,y\in L$, if $x=\bigwedge_{k=1}^{p} \bigvee_{j\in I_k}w_j$ and $y= \bigwedge_{k=1}^{l} \bigvee_{s\in J_k}w_s$,
then $x\leq y$ if and only if for every $u\in\{1,\ldots, l\}$ there is $k\in\{1,\ldots, p\}$ such that $I_k\subseteq J_u$. (*)


Let $f:\{0,1\}^n\rightarrow \{0,1\}$ be an isotone Boolean function. Let $F$ be the corresponding order semi-filter on $\{ 0,1\}^n$ (according to Lemma \ref{prva}).
Further, let $m_1$,\ldots ,$m_p$ be minimal elements of this semi-filter. Let $I_1$,\ldots,$I_p$ be subsets of $\{1,2,\ldots,n\}$, i.e.,
sets of indices, such that $i\in I_k$ if and only if $i-th$ coordinate of $m_k$
 is equal to 1.

For the threshold $t\in L$ associated to the given function $f$ we take
\[t=\bigwedge_{k=1}^{p} \bigvee_{j\in I_k}w_j.\]

Now, we prove that
 \begin{equation}f(x_1,\ldots,x_n)=1\;\mbox{ if and only if }\;\bigvee_{i=1}^{n} (w_i\cdot x_i) \geq t.\label{thm1}\end{equation}
  Indeed, from $f(x_1,\ldots,x_n)=1$, it follows that there is a minimal element $m_l$ in the corresponding semi-filter, such that
   $(x_1,\ldots,x_n)\geq m_l$. Hence, \[\bigvee_{i=1}^{n}( w_i\cdot x_i )\geq \bigvee_{j\in I_l}w_j \geq \bigwedge_{k=1}^{p} \bigvee_{j\in I_k}w_j=t.\]
  Now we suppose that \[\bigvee_{i=1}^{n} (w_i\cdot x_i )\geq \bigwedge_{k=1}^{p} \bigvee_{j\in I_k}w_j\] for an ordered n-tuple $(x_1,\ldots,x_n)$.
  Let $I\subseteq \{1,\ldots n\}$ be the set of indices such that $x_i=1$ if and only if $i\in I$. We prove that there is $s\in\{1,\ldots p\}$ such that
   $I_s\subseteq I$. This follows directly from the above mentioned (*) property  of the free distributive lattice with $n$ generators $w_1,\ldots, w_n$.

  Now, it follows that $(x_1,\ldots, x_n)\geq (y_1,\ldots,y_n),$ where $y_i=1$ if and only if $i\in I_s$.
  Therefore, \[\bigvee_{i=1}^{n} (w_i\cdot y_i) \geq t,\;\mbox{ and }\; f(y_1,\ldots,y_n)=1.\] This finally implies  $\;f(x_1,\ldots,x_n)=1$.
\end{proof}

\begin{rem}\rm
In the previous proposition it is proved not only that every isotone $n$-ary Boolean function is a lattice induced threshold function, but also that the corresponding lattice in each case can be the free distributive lattice with $n$ generators. \hfill$\Box$\end{rem}

\section{Representation of lattice valued up-sets by cuts}\label{acut}

To the end of the article, an arbitrary finite Boolean lattice is denoted by $B$ and  it is represented by $(\{ 0,1\}^n,\leq )$.

 In this part, we represent lattice valued up-sets on $B$  in the framework of cut sets and lattice induced threshold functions.

Our main tool here is a particular lattice valued Boolean function, defined as follows.

Let $B=(\{ 0,1\}^n,\leq )$, $n\in \mathbb{N}$,  $L_D$ a free distributive lattice with $n$ generators $w_1,\ldots ,w_n$ and $\overline{\beta} :B\rightarrow L_D$, an $L_D$-valued function on $B$ defined in the following way: for $x=(x_1,\ldots ,x_n)\in B$
\begin{equation}\overline{\beta} (x)=\bigvee_{i=1}^n (w_i\cdot x_i),\label{can2}\end{equation}
where the function "$\,\cdot\,$" is defined by (\ref{skmn}). By the definition, $\overline{\beta}$ is uniquely (up to a permutation of generators $w_i$) determined by a finite Boolean lattice $B=(\{ 0,1\}^n,\leq )$, i.e., by a positive integer $n$.

What we prove next is that $\overline{\beta}$ can be considered as the main representative of all lattice valued up-sets on $B$, with respect to collections of cuts.

\begin{thm}\label{main}\sl
 Every up-set of a finite Boolean lattice $B=(\{ 0,1\}^n,\leq )$, $n\in \mathbb{N}$,
is a cut of  $\overline{\beta}$.
\end{thm}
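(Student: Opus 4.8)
The goal is to show that an arbitrary up-set $F$ of $B=(\{0,1\}^n,\leq)$ equals some cut $\overline{\beta}_p$ of the function $\overline{\beta}(x)=\bigvee_{i=1}^n(w_i\cdot x_i)$ into the free distributive lattice $L_D$ on generators $w_1,\dots,w_n$. The natural candidate threshold is obtained exactly as in the proof of Theorem \ref{thm1n}: if $F$ is nonempty, let $m_1,\dots,m_p$ be the minimal elements of $F$, let $I_k=\{i\mid (m_k)_i=1\}$, and set
\[
t=\bigwedge_{k=1}^{p}\bigvee_{j\in I_k}w_j\in L_D.
\]
The claim to verify is then $F=\overline{\beta}_t$, i.e. for every $x=(x_1,\dots,x_n)\in B$,
\[
x\in F\quad\Longleftrightarrow\quad \overline{\beta}(x)=\bigvee_{i\,:\,x_i=1}w_i\ \geq\ t .
\]

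First I would handle the two degenerate cases: $F=\emptyset$ is the cut $\overline{\beta}_p$ for $p$ strictly above every value $\overline{\beta}(x)$ (e.g. take $p$ to be the top of $L_D$ if $1\notin\overline{\beta}(B)$, or more safely pass to a suitable larger lattice — but since $F=\emptyset$ means $F$ is trivially handled by Lemma \ref{carup} only if such $p$ exists; cleanly, note $\overline{\beta}(\mathbf 1)=w_1\vee\cdots\vee w_n$ is not the top of $L_D$, so $p=1_{L_D}$ works). If $F=B$, then $\mathbf 0\in F$ is minimal, the only $I_k$ is $\emptyset$, and $t=\bigwedge\emptyset=1_{L_D}$? — no; the convention in the paper counts the empty meet as the top, so I must instead observe that $\mathbf 0\in F$ gives $t\leq\overline{\beta}(\mathbf 0)=0_{L_D}$, hence $t=0$ and every $x$ satisfies $\overline{\beta}(x)\geq 0$. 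For the generic case, the forward direction is the easy half, copied verbatim from Theorem \ref{thm1n}: if $x\in F$ then $x\geq m_l$ for some $l$, so $\overline{\beta}(x)\geq\bigvee_{j\in I_l}w_j\geq t$. The reverse direction is where the work lies: from $\bigvee_{i\in I}w_i\geq t=\bigwedge_k\bigvee_{j\in I_k}w_j$ (where $I=\{i\mid x_i=1\}$), the characterization (*) of the order in a free distributive lattice via conjunctive normal forms — the left side has CNF with the single clause $\{I\}$ — yields that there exists $s$ with $I_s\subseteq I$. Hence $x\geq m_s\in F$, and since $F$ is an up-set, $x\in F$.

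The main obstacle is really just the bookkeeping around (*): one must be careful that the expression $\bigvee_{i\in I}w_i$ is indeed a legitimate element with the obvious CNF (it is — it is a single clause), and that $t$, as written, is already in conjunctive normal form (it is, by construction, a meet of clauses $\bigvee_{j\in I_k}w_j$, though one may first delete any clause $\bigvee_{j\in I_k}w_j$ that is subsumed by another, which corresponds to removing non-minimal $m_k$'s — but the $m_k$ are minimal elements of $F$, so the $I_k$ are pairwise incomparable and no reduction is needed). With (*) applied to $x=\bigvee_{i\in I}w_i$ (one clause, $J_1=I$) and $y=t$, the condition "for every $u$ there is $k$ with $I_k\subseteq J_u$" specializes, for the single clause $J_1=I$, to "there is $k$ with $I_k\subseteq I$", which is exactly what is needed. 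I would also note, as a remark, that this gives a sharper statement than Theorem \ref{main} alone: not only is $F$ a cut of \emph{some} lattice-valued function, it is a cut of the single, canonically determined function $\overline{\beta}$, which is the point of the section.

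Combining both directions, $F=\overline{\beta}_t$, completing the proof; by Lemma \ref{carup} this is consistent with $\overline{\beta}$ being an $L_D$-valued up-set, and together with Lemma \ref{intr} it shows the up-sets of $B$ sit inside the closure system $\overline{\beta}_{L_D}$.
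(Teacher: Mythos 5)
Your proposal is correct and follows essentially the same route as the paper: the paper's proof simply invokes Lemma \ref{prva} and Theorem \ref{thm1n} to get weights (the generators of $L_D$) and a threshold $t$ with $f(x)=1$ iff $\bigvee_{i=1}^n(w_i\cdot x_i)=\overline{\beta}(x)\geq t$, i.e.\ $F=\overline{\beta}_t$, whereas you inline the construction of $t$ from the minimal elements of $F$ and re-run the argument via the property (*). Your extra bookkeeping for the degenerate cases $F=\emptyset$ and $F=B$ (using the empty meet and empty join conventions for the top and bottom of $L_D$) is a welcome addition that the paper leaves implicit.
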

\begin{proof}
Let $B=(\{ 0,1\}^n,\leq )$ be a finite Boolean lattice.   By  Theorem \ref{thm1n},  every isotone Boolean function with $n$ variables is a lattice induced threshold
function over a free distributive lattice $L_D$ with $n$ generators.
By Lemma \ref{prva}, every up-set on $B$ is (as a characteristic function) an isotone Boolean function $f: \{0,1\}^n\rightarrow \{ 0,1\}$.  This means that there are elements $w_1,\ldots ,w_n\in L_D$ (generators), such that
 \[f(x_1,\ldots,x_n) =1 \mbox{ if and only if }  \bigvee_{i=1}^{n} (w_i\cdot x_i) \geq t,\;\mbox{  for some }\;t\in L_D.\]
Thereby, due to (\ref{can2}),
\[f(x_1,\ldots,x_n) =1 \mbox{ if and only if } \overline{\beta}(x)\geq t\;\mbox{ if and only if }x\in \overline{\beta}_t.\]
Hence the up-set whose characteristic function is $f$, coincides with the cut $\overline{\beta}_t$ of $\overline{\beta}$.
\end{proof}

Due to Lemma \ref{carup}, we have the following obvious consequence of Theorem \ref{main}.
\begin{cor}\sl
The $L_D$-valued function $\overline{\beta}$ defined by $(\ref{can2})$ is an $L_D$-valued up-set on $B$. Moreover, the collection of cuts of every $L$-valued up-set on $B$ (for any $L$) is contained in the collection of cuts of $\overline{\beta}$.
\end{cor}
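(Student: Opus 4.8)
The plan is to deduce the corollary from Theorem~\ref{main} together with Lemmas~\ref{prva} and~\ref{carup}, so almost all of the work has already been done. First I would observe that every cut $\overline{\beta}_t$ of $\overline{\beta}$ is an up-set on $B$: by definition $\overline{\beta}_t=\overline{\beta}^{-1}({\uparrow}t)$, and if $x\leq y$ then $\overline{\beta}(x)=\bigvee_{i=1}^n(w_i\cdot x_i)\leq\bigvee_{i=1}^n(w_i\cdot y_i)=\overline{\beta}(y)$ by the monotonicity of the operation ``$\cdot$'' in (\ref{skmn}), so $x\in\overline{\beta}_t$ forces $y\in\overline{\beta}_t$. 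Thus all cuts of $\overline{\beta}$ are up-sets, and by Lemma~\ref{carup} this is exactly the statement that $\overline{\beta}$ is an $L_D$-valued up-set on $B$. (Alternatively, one simply notes that $\overline{\beta}$ has the form (\ref{can2}) which is literally monotone, so $x\leq y$ implies $\overline{\beta}(x)\leq\overline{\beta}(y)$, the defining property of an $L$-valued up-set.)

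For the second assertion, let $L$ be an arbitrary complete lattice and $\mu:B\rightarrow L$ an $L$-valued up-set on $B$. By Lemma~\ref{carup}, every cut $\mu_p$ of $\mu$ is an up-set on $B$. Now apply Theorem~\ref{main}: each up-set of $B$ is a cut of $\overline{\beta}$, hence $\mu_p=\overline{\beta}_{t}$ for a suitable $t\in L_D$. Therefore every member of the collection $\mu_L$ of cuts of $\mu$ lies in the collection $\overline{\beta}_{L_D}$ of cuts of $\overline{\beta}$, i.e., $\mu_L\subseteq\overline{\beta}_{L_D}$, which is the claimed containment.

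There is no real obstacle here; the corollary is a packaging of Theorem~\ref{main}. The only point requiring a word of care is the universal quantifier over the co-domain lattice $L$: the containment must hold for \emph{every} $L$, and this is automatic precisely because Theorem~\ref{main} already delivers \emph{all} up-sets of $B$ as cuts of the single fixed function $\overline{\beta}$, so whatever up-sets arise as cuts of a given $\mu$, they are necessarily among the cuts of $\overline{\beta}$. I would also remark, for completeness, that Lemma~\ref{carup} requires $B$ to be a Boolean lattice, which is exactly our standing hypothesis $B=(\{0,1\}^n,\leq)$, so the lemma applies both to $\overline{\beta}$ and to the arbitrary $\mu$.
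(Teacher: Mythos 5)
Your proposal is correct and follows exactly the route the paper intends: the paper gives no written proof, merely noting that the corollary is an ``obvious consequence'' of Theorem~\ref{main} via Lemma~\ref{carup}, and your argument is precisely the expansion of that remark (monotonicity of $\overline{\beta}$ from (\ref{skmn}) for the first claim, and Lemma~\ref{carup} plus Theorem~\ref{main} for the containment of cut collections). No gaps.
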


\section{Linear combinations}\label{linco}

In the previous section, a technique for representing lattice valued up-sets on a finite Boolean algebra by cuts of particular lattice valued functions is presented. Next we give a special name to the expression by which these functions are formulated.

 Let $B=(\{ 0,1\}^n,\leq )$ be a Boolean lattice, $L$ a complete lattice, $x=(x_1,\ldots ,x_n)\in B$ and $w_1,\ldots ,w_n\in L$. Further, let  the binary function "$\cdot$" which maps $L\times\{ 0,1\}$ into $L$ be defined by (\ref{skmn}).
 Then the term
 \begin{equation}\bigvee_{i=1}^{n} (w_i\cdot x_i),\label{link}\end{equation}
 is a \textbf{linear combination} of elements $w_1,\ldots,w_n$ from $L$. Obviously, if $x=(x_1,\ldots ,x_n)$ is considered to be a variable over $B$, then the expression (\ref{link}) determines an $L$-valued function on $B$.

Let us mention that in the previous section, where we use the above expression (formula (\ref{can2})), the lattice is the free distributive one, $L_D$, and $w_i$ are generators of $L_D$. In the present definition, (\ref{link}), $w_i$ are arbitrary elements of the lattice $L$.

Observe also that in the case of formula (\ref{can2}), the corresponding $L_D$-valued function is  $\overline{\beta}$ and the following is obviously true: \textsl{the closure system consisting  of all up-sets on $B$ is a collection of cuts of $\overline{\beta}$.}

    Next we analyze the analogue problem taking an arbitrary lattice $L$ instead of $L_D$. Namely, we start with a closure system  $\mathcal{F}$ consisting of some up-sets on $B=(\{ 0,1\}^n,\leq )$, and we try to find a lattice $L$ and $w_1,\ldots ,w_n\in L$, such that the family of cuts of the function $(\ref{link})$ over this lattice (a linear combination of elements from $L$) coincides with $\mathcal{F}$.

  The answer to the above problem is not generally positive, as shown by the following example.

  First we advance a property of cuts, which is needed in the example.
  \begin{lemma}\label{lem6}\sl
Let $\mu:B\rightarrow L$ be a lattice valued up-set, such that its collection of cuts is $\mathcal F$. If $ {\uparrow}a\in{\mathcal F}$ and $\mu(a)=p$, then $\mu_p={\uparrow}a$.
\end{lemma}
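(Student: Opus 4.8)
The plan is to unravel the definitions and use the two containments $\mu_p \subseteq {\uparrow}a$ and ${\uparrow}a \subseteq \mu_p$ separately. For the first, I would argue that since ${\uparrow}a$ is itself a cut in $\mathcal{F}$, it equals $\mu_q$ for some $q\in L$; then for every $x\in \mu_q = {\uparrow}a$ we have $x\geq a$, and since $\mu$ is an up-set (Lemma \ref{carup}, or directly from the definition of $L$-valued up-set) this gives $\mu(x)\geq \mu(a)=p$, so $x\in\mu_p$; hence ${\uparrow}a\subseteq\mu_p$. Actually this is already the reverse inclusion, so let me reorganize: the inclusion ${\uparrow}a\subseteq\mu_p$ is the easy half and follows purely from $\mu$ being an up-set together with $\mu(a)=p$ — every $x\geq a$ has $\mu(x)\geq\mu(a)=p$.

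For the harder inclusion $\mu_p\subseteq{\uparrow}a$, the key observation is that $a\in\mu_p$ (since $\mu(a)=p\geq p$), so $\mu_p$ is a cut of $\mu$ containing $a$; being a member of $\mathcal{F}$, which by hypothesis consists of up-sets on $B$, $\mu_p$ is an up-set containing $a$, hence ${\uparrow}a\subseteq\mu_p$ — but again that is the wrong direction. The genuinely needed direction $\mu_p\subseteq{\uparrow}a$ must use that ${\uparrow}a$ is \emph{itself} in $\mathcal{F}$. So I would write ${\uparrow}a=\mu_q$ for a suitable $q$. Since $a\in\mu_q$ we get $\mu(a)\geq q$, i.e.\ $p\geq q$, which by the basic monotonicity of cuts (from $p\geq q$ it follows $\mu_p\subseteq\mu_q$, stated in Section~\ref{prel}) yields $\mu_p\subseteq\mu_q={\uparrow}a$. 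Combining the two inclusions gives $\mu_p={\uparrow}a$.

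So the proof is essentially: pick $q\in L$ with $\mu_q={\uparrow}a$ (possible because ${\uparrow}a\in\mathcal{F}$ and $\mathcal{F}$ is exactly the collection of cuts of $\mu$); note $a\in{\uparrow}a=\mu_q$ forces $\mu(a)\geq q$, hence $p=\mu(a)\geq q$, hence $\mu_p\subseteq\mu_q={\uparrow}a$; and conversely for $x\in{\uparrow}a$, monotonicity of $\mu$ gives $\mu(x)\geq\mu(a)=p$, so $x\in\mu_p$. I do not anticipate a real obstacle here — the only subtlety worth stating explicitly is the legitimacy of choosing $q$ with $\mu_q = {\uparrow}a$, which rests on the hypothesis that the collection of cuts of $\mu$ is precisely $\mathcal{F}$ and that ${\uparrow}a\in\mathcal{F}$; everything else is a two-line application of the order-reversing correspondence between $L$ and cuts.
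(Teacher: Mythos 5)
Your proof is correct and is essentially identical to the paper's own argument: both write ${\uparrow}a=\mu_q$ using ${\uparrow}a\in\mathcal{F}$, deduce $p\geq q$ from $a\in\mu_q$ to get $\mu_p\subseteq\mu_q$, and use monotonicity of $\mu$ to get the reverse inclusion. The only cosmetic issue is the back-and-forth in your first two paragraphs before you settle on the right pairing of inclusions; the final summary paragraph is the clean version and matches the paper.
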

\begin{proof}
Suppose that ${\uparrow}a=\mu_q$, for $q\in L$. By $\mu(a)=p$, and $a\in\mu_q$, we have that $p\geq q$ and thus $\mu_p\subseteq\mu_q$. On the other hand, suppose that $b\in\mu_q$, i.e., that $b\geq a$. Then, $\mu(b)\geq \mu(a)=p$ and $b\in\mu_p$. Hence, $\mu_p=\mu_q$.
\end{proof}

\begin{example}\rm \label{ex2}

Let $B=(\{ 0,1\}^2,\leq )$ be the four element Boolean lattice and
\[\mathcal{F}=\{ \{(1,1)\},\{(1,1), (1,0)\},\{(1,1),(1,0),(0,1)\},\{(1,1),(1,0),(0,1),(0,0)\}\}\] a closure system consisting of some up-sets on $B$.

We show that there is no lattice $L$, hence neither there is an $L$-valued function $\nu:B\rightarrow L$, such that there are $w_1,w_2\in L$ fulfilling that for all $x_1,x_2\in\{ 0,1\}$
\[\nu(x_1,x_2) = (w_1\cdot x_1) \vee (w_2\cdot x_2)\] and that the collections of cuts of $\nu$ is $\mathcal{F}$.

Indeed, suppose that there is a lattice $L$ and elements $w_1, w_2\in L$, such that
$\nu(x_1,x_2) = (w_1\cdot x_1) \vee (w_2\cdot x_2)$, for all $x_1,x_2\in\{ 0,1\}$.

Then, $\nu(0,0)=0\in L_1$, $\nu(0,1)=w_2$, $\nu(1,0)=w_1$ and $\nu(1,1)=w_1\vee w_2$. Now, since the cuts of $\nu$ are supposed to be elements from $\mathcal{F}$, and cuts are up-sets in $B$, we have that $\nu_{w_1\vee w_2}=\{(1,1)\}$, and $w_1\vee w_2$ would be the top element of the lattice $L$: otherwise the empty set would be a cut of this lattice valued function.   Further, by Lemma \ref{lem6}, $\nu_{w_1}=\{(1,1), (1,0)\}$, $\nu_{w_2}=\{(1,1), (1,0), (0,1)\}$ and $\nu_{0}=\{(1,1), (1,0), (0,1), (0,0)\}.$
Since $(1,0)\in \nu_{w_2}$, we have that $\nu(1,0)\geq w_2$, i.e., $w_1\geq w_2$. Hence, $w_1\vee w_2=w_1$, which contradict the assumption that $\nu_{w_1\vee w_2}\not = \nu_{w_1}$.

Hence, the up-sets from the collection $\mathcal{F}$ can not be represented as cuts of an $L$-valued function in the form (\ref{link}).
\hfill$\Box$
\end{example}

According to the above analysis, first we deal with the following problem:

\textsl{Find necessary and sufficient conditions under which a lattice valued up-set $\mu: B\rightarrow L$ on a finite Boolean lattice $B=(\{ 0,1\}^n,\leq )$ can be represented  by the linear combination \[\mu(x)=\bigvee_{i=1}^n (w_i\cdot x_i)\] over $L$ ($x=(x_1,\ldots ,x_n)\in\{ 0,1\}^n, w_1,\ldots ,w_n\in L$).
}

Starting with finite lattices $M$ and $L$ with the bottom elements $0_M$ and $0_L$ respectively, we say that a mapping $\mu:M\rightarrow L$ is a \textbf{$0$--$\vee$--homomorphism}, if for all $x,y\in M$
\begin{eqnarray*}\mu (x\vee y)&=&\mu (x)\vee \mu (y)\;\;\mbox{ and }\\\mu (0_M)&=&0_L.\end{eqnarray*}

In particular, if $\mu$ maps  a Boolean lattice $B=\{ 0,1\}^n$ into $L$, the condition that $\mu$ is a $0$--$\vee$--homomorphism from $B$ to $L$ is equivalent with the following two conditions (observe that $B$ is finite):
for every collection $A$ of some atoms in $B$ \begin{equation}(i)\;\;\mu (\bigvee A)=\bigvee\mu ( A)\;\;\mbox{ and }\;\;(ii)\;\;\mu(0,\dots,0)=0.\label{fat}\end{equation}

\begin{thm}\sl \label{thom}
Let $B=(\{ 0,1\}^n,\leq )$ be a finite Boolean lattice and $L$ an arbitrary complete lattice. Then an $L$-valued up-set $\mu: \{0,1\}^n\rightarrow L$ can be represented in the form
\[\mu(x)=\bigvee_{i=1}^n (w_i\cdot x_i)\] for some elements $w_1,\ldots ,w_n\in L$ if and only if $\mu$ as a mapping from $B$ to $L$ is a $0$--$\vee$--homomorphism.
\end{thm}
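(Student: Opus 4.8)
The plan is to prove the two implications separately, with the forward direction being a short computation and the backward direction being the substantive part. First, suppose $\mu(x)=\bigvee_{i=1}^n(w_i\cdot x_i)$ for some $w_1,\dots,w_n\in L$. Evaluating at $x=(0,\dots,0)$ gives $\mu(0,\dots,0)=\bigvee_{i=1}^n 0 = 0$, which is condition $(ii)$ of (\ref{fat}). For condition $(i)$, let $A$ be a collection of atoms of $B$; say $A=\{e_j\mid j\in J\}$ where $e_j$ is the atom with a single $1$ in coordinate $j$. Then $\bigvee A$ is the tuple with $1$'s exactly in the coordinates indexed by $J$, so $\mu(\bigvee A)=\bigvee_{j\in J}w_j$. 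On the other hand $\mu(e_j)=w_j$ for each $j$, hence $\bigvee\mu(A)=\bigvee_{j\in J}w_j=\mu(\bigvee A)$. By the remark following (\ref{fat}) this shows $\mu$ is a $0$--$\vee$--homomorphism.

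For the converse, suppose $\mu:B\to L$ is a $0$--$\vee$--homomorphism. The natural candidate is to set $w_i:=\mu(e_i)$, where $e_i$ is the $i$-th atom of $B$, and then verify that $\mu(x)=\bigvee_{i=1}^n(w_i\cdot x_i)$ for every $x\in\{0,1\}^n$. Fix $x=(x_1,\dots,x_n)$ and let $I=\{i\mid x_i=1\}$. Then $x=\bigvee\{e_i\mid i\in I\}$ in $B$ (using $\mu(0,\dots,0)=0$ to handle the empty case $I=\varnothing$). Applying condition $(i)$ of (\ref{fat}) with $A=\{e_i\mid i\in I\}$ gives $\mu(x)=\bigvee_{i\in I}\mu(e_i)=\bigvee_{i\in I}w_i$. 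On the other side, by the definition (\ref{skmn}) of the operation $\cdot$, we have $w_i\cdot x_i=w_i$ when $i\in I$ and $w_i\cdot x_i=0$ otherwise, so $\bigvee_{i=1}^n(w_i\cdot x_i)=\bigvee_{i\in I}w_i$. Hence the two expressions agree, which is the desired representation.

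The only genuinely delicate point is the boundary case $I=\varnothing$, i.e. $x=(0,\dots,0)$: the join over an empty index set must be interpreted as the bottom $0$ of $L$, and this is exactly where condition $(ii)$ (equivalently, the $\mu(0_M)=0_L$ clause in the definition of $0$--$\vee$--homomorphism) is used; without it the representation would fail at the zero tuple. Everything else is a routine unwinding of the definition of $\cdot$ from (\ref{skmn}) and of the join of atoms in a finite Boolean lattice, together with the equivalence between being a $0$--$\vee$--homomorphism and conditions (\ref{fat}) already recorded in the text. I would also remark in passing that this $\mu$, being an $L$-valued up-set by hypothesis, automatically has all cuts up-sets by Lemma \ref{carup}, so the representing linear combination is consistent with the cut-theoretic picture of the previous section; but this observation is not needed for the proof of the equivalence itself.
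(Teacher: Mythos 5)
Your proof is correct and follows essentially the same route as the paper's: both directions hinge on setting $w_i=\mu(a_i)$ for the atoms $a_i$ and expanding an arbitrary $x$ as the join of the atoms below it, with the zero tuple handled by the $\mu(0_M)=0_L$ clause. The only cosmetic difference is that you prove the ``representation implies homomorphism'' direction directly, whereas the paper phrases it by contraposition; the underlying computation is identical.
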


\begin{proof} Let $\mu: \{0,1\}^n\rightarrow L$  be a $0$--$\vee$--homomorphism from $B=\{ 0,1\}^n$ to $L$.
 By our notation,  $a_i$ is an atom in $B$, namely $a_i=(0,\dots, 1, \dots, 0)$, where $1$ is in the $i$-th place. Define $w_i=\mu(a_i)$. The mapping $\mu$ need not be an injection, hence some of the elements $w_i$ and $w_j$ might coincide. Now, for the bottom element in $B$, i.e.,
if $x=(0,\dots,0)$, by the assumption we have \[\mu(x)=\bigvee_{i=1}^n (w_i\cdot x_i) = 0.\]  If $J\subseteq \{1,\dots, n\}$ is a set of indices and $x=(x_1,\ldots ,x_n)\in\{0,1\}^n$ has $1$ in places that corresponds to $J$, then by $(i)$ in (\ref{fat}), \[\mu(x)=\mu(\bigvee_{i\in J} a_i)= \bigvee_{i\in J}w_i = \bigvee_{i=1}^n (w_i\cdot x_i).\]

We prove the opposite implication by contraposition. Suppose that there are atoms $\{a_i\mid i\in J\}$, for $J\subseteq \{1,\dots, n\}$ such that
\[\mu(\bigvee_{i\in J} a_i)\neq \bigvee_{i\in J}\mu( a_i).\]  From \[\mu(x)=\bigvee_{i=1}^n (w_i\cdot x_i),\] we would have that $\mu(a_i)=w_i$ for all $i\in J$, and that
 \[\mu(\bigvee_{i\in J} a_i)= \bigvee_{i\in J}w_i,\]
 which is not true by the assumption.
\end{proof}

Next we analyze the same problem in the framework of families of cuts and canonical representations of lattice valued Boolean functions (Section \ref{prfuz}).

As  in Lemma \ref{malalema}, if $\mathcal{F}$ is a closure system consisting of some up-sets on  $B=(\{ 0,1\}^n,\leq )$, then
for  $x\in P$,  we define
\begin{equation}\overline{x}=\bigcap\{f\in{\mathcal F}\mid x\in f\}.\label{repkan1}\end{equation}

\begin{prop}\sl \label{nec}
Let $\mathcal{F}$ be a closure system of some up-sets on $B$. If $\mathcal{F}$ is a family of cuts of an $L$-valued up-set $\mu$ on $B$
represented by a linear combination over $L$, then the following holds: for all $x,y\in B$  \begin{equation}\mbox{from $\overline{x}\subseteq \overline{y}$ it follows that $\overline{x\vee y}=\overline{x}$.}\label{inclu}\end{equation}
\end{prop}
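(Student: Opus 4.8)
The hypothesis gives a lattice valued up-set $\mu:B\rightarrow L$ whose family of cuts is exactly $\mathcal{F}$, and which is moreover represented by a linear combination $\mu(x)=\bigvee_{i=1}^n(w_i\cdot x_i)$. By Theorem \ref{thom}, this representability is equivalent to $\mu$ being a $0$--$\vee$--homomorphism from $B$ to $L$. The key bridge between $\mathcal{F}$ and $\mu$ is the canonical representation: by Proposition \ref{synth} and the Remark following it, $\overline{x}$ as defined in (\ref{repkan1}) is precisely $\widehat{\mu}(x)$ (with the Remark's observation $\mu_f=f$ for $f\in\mathcal{F}$), and by Proposition \ref{izok} the cuts are preserved. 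So I plan to translate the set-theoretic hypothesis $\overline{x}\subseteq\overline{y}$ into a lattice inequality about $\mu$, exploit the $\vee$-homomorphism property, and translate back.

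\textbf{Step 1: reformulate $\overline{x}\subseteq\overline{y}$ via $\mu$.} Since $\overline{x}=\widehat{\mu}(x)$ and the order in $(\mu_L,\leq)$ is dual to inclusion, $\overline{x}\subseteq\overline{y}$ says $\widehat{\mu}(y)\leq\widehat{\mu}(x)$ in $\mu_L$. I want to lift this to $L$: I claim $\overline{x}\subseteq\overline{y}$ is equivalent to $\mu(y)\leq\mu(x)$ in $L$. One direction is immediate ($\mu(y)\leq\mu(x)$ forces $\{f\in\mathcal{F}\mid x\in f\}\subseteq\{f\in\mathcal{F}\mid y\in f\}$ since each $f=\mu_p$ and $y\in\mu_p$ whenever $x\in\mu_p$ and $\mu(y)\leq\mu(x)$... wait, need $\mu(x)\geq p\Rightarrow\mu(y)\geq p$, which needs $\mu(y)\geq\mu(x)$). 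Let me be careful: $\overline{x}\subseteq\overline{y}$ means every cut containing $x$ contains $y$; the smallest such cut is $\mu_{\mu(x)}$, so $y\in\mu_{\mu(x)}$, i.e. $\mu(y)\geq\mu(x)$. Conversely if $\mu(y)\geq\mu(x)$ then any cut $\mu_p\ni x$ has $\mu(y)\geq\mu(x)\geq p$ so $y\in\mu_p$. Hence $\overline{x}\subseteq\overline{y}\iff\mu(x)\leq\mu(y)$ in $L$. (I had the inclusion direction backwards above; the containment of closures reverses because larger $\mu$-value means smaller principal-filter preimage, i.e. the point sits in fewer cuts.)

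\textbf{Step 2: apply the $\vee$-homomorphism.} Assume $\overline{x}\subseteq\overline{y}$, so $\mu(x)\leq\mu(y)$. Using $\mu(x\vee y)=\mu(x)\vee\mu(y)$ (Theorem \ref{thom}), we get $\mu(x\vee y)=\mu(x)\vee\mu(y)=\mu(y)$. Now re-apply Step 1's equivalence: $\mu(x\vee y)=\mu(y)$ means $\mu(x\vee y)\geq\mu(y)$ and $\mu(y)\geq\mu(x\vee y)$, hence $\overline{y}\subseteq\overline{x\vee y}$ and $\overline{x\vee y}\subseteq\overline{y}$, i.e. $\overline{x\vee y}=\overline{y}$. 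But $\mu(x)\leq\mu(y)$ gives $\overline{y}\subseteq\overline{x}$; combined with the hypothesis $\overline{x}\subseteq\overline{y}$ this yields $\overline{x}=\overline{y}$, and therefore $\overline{x\vee y}=\overline{x}$, which is (\ref{inclu}).

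\textbf{Main obstacle.} The only genuinely delicate point is Step 1 — getting the direction of the order correspondence right between $L$, the cut lattice $\mu_L$, and set-inclusion of closures, and in particular the fact that $\overline{x}\subseteq\overline{y}$ forces the $L$-inequality $\mu(x)\leq\mu(y)$ rather than something weaker like $\widehat{\mu}(x)\geq\widehat{\mu}(y)$ in $\mu_L$ only. This works because $\overline{x}=\bigcap\{f\in\mathcal{F}\mid x\in f\}$ equals the smallest cut containing $x$, which is $\mu_{\mu(x)}$; containing $y$ in this smallest cut is exactly $\mu(y)\geq\mu(x)$. Everything else is a short chain of the established equivalences. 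I would keep the write-up to this: state the equivalence $\overline{x}\subseteq\overline{y}\iff\mu(x)\leq\mu(y)$ as a one-line observation from (\ref{invim}) and the definition of $\overline{x}$, invoke Theorem \ref{thom} for $\mu(x\vee y)=\mu(x)\vee\mu(y)$, and conclude.
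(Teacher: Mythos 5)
Your overall strategy is sound, but Step 1 --- the step you yourself single out as the delicate one --- has the order correspondence backwards, and the error surfaces as an internal inconsistency in Step 2. Since $\overline{x}$ is the \emph{least} member of $\mathcal{F}$ containing $x$ (namely $\mu_{\mu(x)}$, by (\ref{pres})), the inclusion $\overline{x}\subseteq\overline{y}$ is equivalent to $x\in\overline{y}$, i.e.\ to ``every cut containing $y$ contains $x$'', i.e.\ to $\mu(x)\geq\mu(y)$ --- not to $\mu(x)\leq\mu(y)$ as you conclude. (Check against Lemma \ref{malalema} $a)$: $x\leq y$ gives $\mu(x)\leq\mu(y)$ together with $\overline{y}\subseteq\overline{x}$; or against the Remark after Proposition \ref{homom}, where $\mu(a)=1\geq p=\mu(b)$ while $\overline{a}=\{a\}\subseteq\{a,b\}=\overline{b}$.) The slip then propagates: in the last sentence of Step 2 you use the \emph{correct} direction (``$\mu(x)\leq\mu(y)$ gives $\overline{y}\subseteq\overline{x}$''), and combining that with the reversed Step 1 you deduce $\overline{x}=\overline{y}$, which is false in general --- the hypothesis $\overline{x}\subseteq\overline{y}$ certainly does not force equality of the two closures.

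The fix is immediate, and it leaves you with an argument that is actually cleaner than the paper's: from $\overline{x}\subseteq\overline{y}$ get $\mu(x)\geq\mu(y)$; by Theorem \ref{thom} the linear-combination hypothesis gives $\mu(x\vee y)=\mu(x)\vee\mu(y)=\mu(x)$; hence $\overline{x\vee y}=\mu_{\mu(x\vee y)}=\mu_{\mu(x)}=\overline{x}$. This works directly with $\mu$, whereas the paper argues by contraposition, passing through the canonical representation $\widehat{\mu}$ and Proposition \ref{homom} and first disposing of the comparable case. So: right idea, and a genuinely more direct route than the paper's, but as written the proof reaches the conclusion only via a false intermediate claim and must be corrected before it counts.
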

\begin{proof} By assumption, a closure system $\mathcal{F}$ on $B$ consisting of some up-sets on $B$ is a collection of cuts of a fuzzy set $\mu: \{0,1\}^n\rightarrow L$, for some lattice $L$, i.e., $\mu_L=\mathcal{F}$. We also assume that $\mu$ can be represented as a linear combination over $L$.

Now, if $\widehat{\mu}:B\rightarrow \mathcal{F}$ is the canonical representation of $\mu$, then by (\ref{can}), for every $x\in B$, $\widehat{\mu}(x)=\overline{x}$.

Suppose that
there are $x,y\in B$ such that $\overline{x}\subseteq \overline{y}$ and $\overline{x\vee y}\not = \overline{x}$.
Then $x$ and $y$ are incomparable, since by Lemma \ref{malalema} $a)$, $x\leq y$ would imply that $\overline{y}\subseteq\overline{x}$ and thus
$\overline{y}=\overline{x}=\overline{x\vee y}$. Similarly, from $y\leq x$ it would follow that $x\vee y = x$ and $\overline{x\vee y}=\overline{x}$.

Therefore,
we have that $\widehat{\mu}(x)\vee\widehat{\mu}(y) = \widehat{\mu}(x)\cap\widehat{\mu}(y) = \overline{x}\cap\overline{y} = \overline{x} \not = \overline{x\vee y} =  \widehat{\mu}(x\vee y).  $
Applying Proposition \ref{homom}, by contraposition we obtain that $\mu (x)\vee\mu (y)\neq\mu (x\vee y)$. Now, by Theorem \ref{thom}, $\mu$ is not representable by a linear combination.
\end{proof}

\begin{example}\rm
Let us consider the family $\mathcal{F}$ from Example \ref{ex2}. We already proved that this family is not the collection of cuts for a lattice valued function representable by a linear combination. If we define a mapping from $B$ to ${\mathcal F}$ by (\ref{repkan1}),
then the condition (\ref{inclu}) from Proposition \ref{nec} is not satisfied. \hspace*{\fill}$\Box$
\end{example}

\begin{thm}\sl
Let $\mathcal{F}$ be a closure system of some
up-sets on a Boolean algebra $B$ and for $x\in B$, define $\overline{x}$ by $(\ref{repkan1})$: \[\overline{x}=\bigcap\{f\in{\mathcal F}\mid x\in f\}.\]
The following conditions are equivalent:

$(i)$ for all $x,y\in B$  \[\mbox{from $\overline{x}\subseteq \overline{y}$ it follows that $\overline{x\vee y}=\overline{x}$.}\]

$(ii)$ for all $x,y\in B$,  $\overline{x\vee y}=\overline{x}\cap\overline{y}$.

$(iii)$
There is a lattice $L$ such that $\mathcal{F}$ is a family of cuts of an $L$-valued up-set on $B$ which can be
represented as a linear combination over $L$.
\end{thm}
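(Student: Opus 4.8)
The plan is to establish the cyclic chain of implications $(i)\Rightarrow(ii)\Rightarrow(iii)\Rightarrow(i)$. The last of these is already available: it is exactly Proposition \ref{nec}, so nothing new is needed there. (The implication $(ii)\Rightarrow(i)$ is also immediate, since $\overline{x}\subseteq\overline{y}$ forces $\overline{x}\cap\overline{y}=\overline{x}$, but it is not required for the cycle.) Hence the real work consists of proving $(i)\Rightarrow(ii)$ and $(ii)\Rightarrow(iii)$.

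For $(i)\Rightarrow(ii)$ I would prove the two inclusions separately. The inclusion $\overline{x\vee y}\subseteq\overline{x}\cap\overline{y}$ holds unconditionally, since $x\leq x\vee y$ and $y\leq x\vee y$ and Lemma \ref{malalema}(a) give $\overline{x\vee y}\subseteq\overline{x}$ and $\overline{x\vee y}\subseteq\overline{y}$. For the reverse inclusion, take an arbitrary $z\in\overline{x}\cap\overline{y}$. From $z\in\overline{x}$ and Lemma \ref{malalema}(d) we get $\overline{z}\subseteq\overline{x}$, so $(i)$ applied to the pair $z,x$ gives $\overline{z\vee x}=\overline{z}$; moreover $\overline{z}\subseteq\overline{y}$, hence $\overline{z\vee x}=\overline{z}\subseteq\overline{y}$, and applying $(i)$ again to the pair $z\vee x,\,y$ yields $\overline{(z\vee x)\vee y}=\overline{z\vee x}=\overline{z}$. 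Since $x\vee y\leq z\vee x\vee y$, Lemma \ref{malalema}(a) gives $\overline{z}=\overline{z\vee x\vee y}\subseteq\overline{x\vee y}$, and as $z\in\overline{z}$ by Lemma \ref{malalema}(b), we conclude $z\in\overline{x\vee y}$. This establishes $\overline{x}\cap\overline{y}\subseteq\overline{x\vee y}$, and together with the first inclusion proves $(ii)$.

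For $(ii)\Rightarrow(iii)$ I would exhibit the witness concretely through a canonical representation. Let $L$ be the closure system $\mathcal{F}$ ordered dually to inclusion, which is a complete lattice as the order-dual of the complete lattice $(\mathcal{F},\subseteq)$, and define $\mu:B\rightarrow L$ by $\mu(x)=\overline{x}=\bigcap\{f\in\mathcal{F}\mid x\in f\}$ as in (\ref{fsynth}). By Proposition \ref{synth} and the Remark following it, the collection of cuts of $\mu$ is exactly $\mathcal{F}$, and since every member of $\mathcal{F}$ is an up-set on $B$, Lemma \ref{carup} shows $\mu$ is an $L$-valued up-set. It then remains to check that $\mu:B\rightarrow L$ is a $0$--$\vee$--homomorphism and invoke Theorem \ref{thom}. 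The join in $L$ is set intersection, so $\mu(x)\vee\mu(y)=\overline{x}\cap\overline{y}$, which by $(ii)$ equals $\overline{x\vee y}=\mu(x\vee y)$; and the bottom element $0_L$ of $L$ is the $\subseteq$-largest member of $\mathcal{F}$, namely $B$, while $\mu(0,\dots,0)=\overline{(0,\dots,0)}=B$ since the only up-set of $B$ containing the least element of $B$ is $B$ itself. Theorem \ref{thom} then provides $w_i=\mu(a_i)\in L$ with $\mu(x)=\bigvee_{i=1}^n(w_i\cdot x_i)$, which is precisely statement $(iii)$.

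The main obstacle is the reverse inclusion in $(i)\Rightarrow(ii)$: one must chain condition $(i)$ twice, together with parts (a), (b), (d) of Lemma \ref{malalema} in the correct order, to drag an arbitrary element of $\overline{x}\cap\overline{y}$ into $\overline{x\vee y}$. In contrast, $(ii)\Rightarrow(iii)$ is essentially bookkeeping — assembling Proposition \ref{synth}, Lemma \ref{carup} and Theorem \ref{thom} around the canonical representation — with the only non-formal step being the easy identification of $\mu(0,\dots,0)$ with the bottom of $L$.
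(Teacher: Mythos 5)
Your proposal is correct and follows essentially the same route as the paper: the unconditional inclusion plus a double application of $(i)$ together with Lemma \ref{malalema}(a),(b),(d) for $(i)\Rightarrow(ii)$, and the canonical construction of Proposition \ref{synth} combined with Theorem \ref{thom} for $(ii)\Rightarrow(iii)$, with $(iii)\Rightarrow(i)$ delegated to Proposition \ref{nec}. The only (immaterial) differences are that the paper also records the trivial $(ii)\Rightarrow(i)$ and, in the reverse inclusion of $(i)\Rightarrow(ii)$, applies $(i)$ the second time to the pair $(x\vee z,\,y\vee z)$ rather than to $(z\vee x,\,y)$.
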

\begin{proof}
$(iii)$ $\rightarrow$ $(i)$   Proved in Proposition \ref{nec}.

$(i)$ $\rightarrow$ $(ii)$

From $\overline{x\vee y}\subseteq \overline{x}$ and  $\overline{x\vee y}\subseteq \overline{y}$, it follows that
$\overline{x\vee y}\subseteq\overline{x}\cap\overline{y}$.

Suppose that $z\in\overline{x}\cap\overline{y}$. Then, $z\in \overline{x}$ and  $z\in \overline{y}$. By Lemma \ref{malalema} $ d)$,
we have that $\overline{z}\subseteq \overline{x}$ and  $\overline{z}\subseteq \overline{y}$. Hence, by $(i)$, $\overline{x\vee z}=\overline{z}$ and
$\overline{y\vee z}=\overline{z}$. Applying $(i)$ again (and $\overline{x\vee z}\subseteq \overline{y\vee z}$), we obtain $\overline{x\vee y\vee z}=\overline{x\vee z} = \overline{z}$. By $ x\vee y\leq x\vee y\vee z$, we have that
$\overline{x\vee y\vee z}\subseteq \overline{x\vee y}$. Hence, $\overline{z}\subseteq \overline{x\vee y}$, and since by Lemma \ref{malalema} $ b)$, $z\in\overline{z}$, it follows that $ z\in\overline{x\vee y}$.

Therefore,  $\overline{x\vee y}\supseteq\overline{x}\cap\overline{y}$.

$(ii)$ $\rightarrow$ $(i)$

This implication follows directly, as a property of inclusion and set operations.

$(ii)$ $\rightarrow$ $(iii)$

We start from a Boolean algebra $B$ and from ${\mathcal F}\subseteq {\mathcal P}(B)$, which is closed under intersection and satisfying also $\overline{x\vee y}=\overline{x}\cap\overline{y}$, for  $x,y\in B$.

Then we use Proposition \ref{synth} and formula (\ref{fsynth}) in order to construct a lattice valued function  $\mu:B\rightarrow {\mathcal F}$:
 \[\mu(x)=\bigcap\{ f\in {\mathcal F}\mid x\in f\}.\]

 As mentioned in Section \ref{prfuz}, in this case $\mu$ coincides with its canonical representation $\widehat{\mu}$, that is $\mu=\widehat{\mu}$. In addition, we have that for every $x\in B$, $\mu (x)= \overline{x}$. Hence, by $(ii)$,
\[\mu (x\vee y) = \mu(x)\cap \mu(y).\]
Further, \[\mu (0) = \overline{0} = B\in {\mathcal F}.\]($B$ is the bottom element in ${\mathcal F}$, since the latter is ordered dually to set-inclusion.)

 Then by Theorem \ref{thom}, we have that $\mu: \{0,1\}^n\rightarrow {\mathcal F}$ can be represented in the form
\[\mu(x)=\bigvee_{i=1}^n (w_i\cdot x_i)\] for some elements $w_1,\ldots ,w_n\in {\mathcal F}$.

\end{proof}

Now we are ready to solve the second problem related to representation of lattice valued up-sets by linear combinations:

\textsl{Given a lattice valued up-set $\mu: B\rightarrow L$ on a finite Boolean lattice $B=\{ 0,1\}^n$, find a lattice $L_1$ and a lattice valued function $\nu: B\rightarrow L_1$ defined by the formula \[\nu(x)=\bigvee_{i=1}^n (w_i\cdot x_i)\] where $w_1,\ldots ,w_n\in L_1$, such that the collections of cuts of $\mu$ and $\nu$ coincide.
}

\begin{cor}\sl
 For a lattice valued up-set $\mu: B\rightarrow L$ on a finite Boolean lattice $B=\{ 0,1\}^n$, there is a lattice $L_1$ and a lattice valued function $\nu: B\rightarrow L_1$ defined by the formula \[\nu(x)=\bigvee_{i=1}^n (w_i\cdot x_i)\] such that the collections of cuts of $\mu$ and $\nu$ coincide if and only if   $\overline{x\vee y}=\overline{x}\cap\overline{y}$ for  $x,y\in B$, where the operator $\bar{\;}$ is defined by cuts of $\mu$.
\end{cor}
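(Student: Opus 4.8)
The plan is to obtain this corollary as an immediate specialization of the preceding theorem, instantiating its closure system $\mathcal{F}$ with the collection of cuts of $\mu$ itself.

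Concretely, I would first set $\mathcal{F}:=\mu_L$, the collection of all cuts of $\mu$. By Lemma~\ref{intr} this $\mathcal{F}$ is a closure system on $B$; and since $\mu$ is an $L$-valued up-set on the finite Boolean lattice $B=(\{0,1\}^n,\leq)$, Lemma~\ref{carup} ensures that every cut of $\mu$ is an up-set on $B$. Hence $\mathcal{F}$ is a closure system consisting of some up-sets on $B$, which is exactly the setting of the preceding theorem (and $B$ is finite, as that theorem implicitly requires through Theorem~\ref{thom}). Next I would note that the operator $\bar{\;}$ ``defined by cuts of $\mu$'' in the statement of the corollary is precisely the operator $\overline{x}=\bigcap\{f\in\mathcal{F}\mid x\in f\}$ of $(\ref{repkan1})$ attached to this particular $\mathcal{F}$ --- equivalently, $\overline{x}=\widehat{\mu}(x)$, the canonical representation of $\mu$, by $(\ref{can})$.

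With these identifications, I would simply invoke the equivalence $(ii)\Leftrightarrow(iii)$ of the preceding theorem for $\mathcal{F}=\mu_L$. Clause $(ii)$ then reads $\overline{x\vee y}=\overline{x}\cap\overline{y}$ for all $x,y\in B$, which is the condition in the corollary; clause $(iii)$ asserts the existence of a lattice $L_1$ for which $\mathcal{F}$ is the collection of cuts of an $L_1$-valued up-set $\nu$ on $B$ that is representable in the form $\nu(x)=\bigvee_{i=1}^n (w_i\cdot x_i)$ over $L_1$. Since $\mathcal{F}=\mu_L$, the statement that $\mathcal{F}$ is the collection of cuts of $\nu$ is literally the statement that the collections of cuts of $\mu$ and of $\nu$ coincide. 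Thus the corollary is nothing but $(ii)\Leftrightarrow(iii)$ transcribed for this choice of $\mathcal{F}$.

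Because everything is reduced to an already proved theorem, there is no substantial obstacle; the only care needed is bookkeeping. One must check that $\mathcal{F}=\mu_L$ genuinely consists of up-sets --- this is exactly where the hypothesis that $\mu$ is an up-set enters, via Lemma~\ref{carup} --- that ``family of cuts'' in clause $(iii)$ is read as the entire collection of cuts (so that it matches the phrase ``the collections of cuts of $\mu$ and $\nu$ coincide''), and that the closure operator induced by $\mathcal{F}$ coincides with the one ``defined by cuts of $\mu$'', so that clauses $(i)$/$(ii)$ of the theorem are verbatim the condition of the corollary. As a consistency remark one may also recall that any function of the form $\bigvee_{i=1}^n (w_i\cdot x_i)$ is automatically an $L_1$-valued up-set, so the ``up-set'' requirement in $(iii)$ costs nothing extra.
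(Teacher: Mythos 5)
Your proof is correct and follows essentially the same route as the paper: the corollary is obtained by specializing the preceding theorem's equivalence to the closure system $\mathcal{F}=\mu_L$, after checking via Lemmas \ref{intr} and \ref{carup} that this is a closure system of up-sets and that $\bar{\;}$ is the associated operator. The only cosmetic difference is that the paper re-derives the direction ``existence of $\nu$ implies the condition'' directly from Theorem \ref{thom} and Proposition \ref{homom} rather than citing $(iii)\Rightarrow(ii)$ wholesale, but the underlying argument is the same.
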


\begin{proof}
The "only if"  part follows from the $(ii)$ $\rightarrow$ $(iii)$ from the previous theorem.

Suppose that there is a lattice valued up-set $\nu: B\rightarrow L_1$ is defined by the formula \[\nu(x)=\bigvee_{i=1}^n (w_i\cdot x_i),\] having the same cuts as the starting lattice valued function $\mu$. Then, by Theorem \ref{thom},  $\nu$ is a $0$-$\vee$-homomorphism. Furthermore, by Proposition \ref{homom},
the canonical lattice valued function is also $0$-$\vee$-homomorphism, which is equivalent with condition $\overline{x\vee y}=\overline{x}\cap\overline{y}$ for  $x,y\in B$.
\end{proof}

\section{Acknowledgments}

Research of the first author is partially supported by
the NFSR of Hungary (OTKA), grant no.\ K83219. Supported by the European Union and co-funded by the European Social Fund
   under the project ``Telemedicine-focused research activities on the field of Mathematics,
   Informatics and Medical sciences'' of project number
     ``T\'AMOP-4.2.2.A-11/1/KONV-2012-0073''.

Research of the second and the third author is supported by Ministry Education and Science, Republic of Serbia, Grant No.\ 174013.

Research of all authors is partially supported by the Provincial Secretariat
for Science and Technological Development, Autonomous  Province of
Vojvodina, grant "Ordered structures and applications".


%

\end{document}